\def\dessin#1#2{\includegraphics[#1]{#2}}
\newtheorem{theo}{Theorem}
\newtheorem{defn}[theo]{Definition}
\newtheorem{lem} [theo]{Lemma}
\newtheorem{cor}[theo]{Corollary}
\newtheorem{prop}[theo]{Proposition}
\newtheorem{rem}[theo]{Remark}
\newtheorem{conj}[theo]{Conjecture}
\newcommand\qbinom[2]{\left[#1 \atop #2  \right]_q}
\def\n{\mathfrak{n}}
\def\N{\mathbb{N}}
\def\Z{\mathbb{Z}}
\newcommand{\area}{\operatorname{area}}
\newcommand{\coarea}{\operatorname{coarea}}
\newcommand{\dinv}{\operatorname{dinv}}
\newcommand{\codinv}{\operatorname{codinv}}
\newcommand{\arm}{\operatorname{arm}}
\newcommand{\leg}{\operatorname{leg}}
\newcommand{\Cat}{\texttt{Cat}}
\title[Rational Dyck paths and $(m,n)$-cores]{Rank complement of rational Dyck paths and conjugation of $(m,n)$-core partitions}
\author{Guoce Xin}
\address{School of Mathematical Sciences, Capital Normal University, Beijing 100048, PR China}
\email{guoce.xin@gmail.com}
\date{April 8, 2015} 
\begin{document}

\begin{abstract}
Given a coprime pair $(m,n)$ of positive integers, rational Catalan numbers $\frac{1}{m+n} \binom{m+n}{m,n}$ counts two combinatorial objects:
rational $(m,n)$-Dyck paths are lattice paths in the $m\times n$ rectangle that never go below the diagonal;
$(m,n)$-cores are partitions with no hook length equal to $m$ or $n$.
Anderson established a bijection between $(m,n)$-Dyck paths and $(m,n)$-cores. We define a new transformation, called rank complement, on rational Dyck paths. We show that rank complement corresponds to conjugation of $(m,n)$-cores under Anderson's bijection. This leads to: i) a new approach to characterizing $n$-cores; ii)
 a simple approach for counting the number of self-conjugate $(m,n)$-cores; iii) a proof of the equivalence of two conjectured combinatorial sum formulas, one over rational $(m,n)$-Dyck paths and the other over $(m,n)$-cores, for rational Catalan polynomials.
\end{abstract}

\maketitle
{\small Mathematics Subject Classifications: 05A17, 05A19, 05E05}

{\small \textbf{Keywords}: rational Dyck paths; core partitions; rational Catalan polynomials. 

\section{Introduction}
Rational Catalan numbers are defined for a coprime pair $(m,n)$ of positive integers by
$$\Cat_{m,n}= \frac{1}{m+n} \binom{m+n}{m,n}=\frac{1}{m}\binom{m+n-1}{m-1,n} =\frac{(m+n-1)!}{m!n!}.$$
It was known  to count the set $\cal D_{m,n}$ of rational $(m,n)$-Dyck paths
in the $m\times n$ rectangle (see, e.g., \cite{Bizley}). To be precise, $(m,n)$-Dyck paths (for general $(m,n)$)  are lattice paths from $(0,0)$ to $(m,n)$ that use unit steps $(1,0)$ or $(0,1)$ and never go below the main diagonal line $y=n x/m$.
The ordinary $(n,n)$-Dyck path is counted by the classical Catalan number $\Cat_{n+1,n}$, which is known \cite{Stanley-Catalan} to count more than 200 distinct families of combinatorial objects.
Since
$(n+1,n)$-Dyck paths are easily seen to be in bijection with $(n,n)$-Dyck paths, this case is always referred to as the classical case.

Rational Catalan numbers also appears in the context of partitions. Anderson established a bijection $\alpha$ from the set $\cal D_{m,n}$ of rational Dyck paths to the set $\cal P_{m,n}$ of $(m,n)$-cores, which are partitions having no hook lengths equal to $m$ or $n$. Recently, Armstrong et. al.
\cite{Armstrong-mn-core} established some interesting results and conjectures on $(m,n)$-cores. Some of the conjectures has been (partly) solved (See, e.g., \cite{chen-self-conjugate-st-cores}, \cite{Stanley-Zanello}, \cite{Johnson-average-mn-core}), some of them are still open.

Our first contribution is to define the rank complement transformation on $(m,n)$-Dyck paths in Section \ref{sec:Rank Complement}. Then we claim in Theorem \ref{t-mncore-conjugate} that under Ansderson's map, rank complement of $(m,n)$-Dyck paths corresponds to conjugation of $(m,n)$-cores. Conjugation of $(m,n)$-cores was also described using semimodules in \cite[Lemma 2.21]{Gorsky-Mazin2}. This correspondence was not known before. See Armstrong's talk \cite{Armstrong-slides} in 2012.
Using this correspondence, we give a simple proof of the following result of \cite{number of self-conjugate mn core partitions}.
\begin{theo}\label{c-num-self-conjugate-mncore}
  Let $(m,n)$ be a coprime pair. Then the number of self conjugate $(m,n)$-cores is given by
\begin{align}
\binom{\lfloor m/2\rfloor +\lfloor n/2\rfloor}{ \lfloor m/2\rfloor ,\lfloor n/2\rfloor}.
\end{align}
\end{theo}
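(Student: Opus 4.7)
The plan is to route the enumeration through Anderson's bijection. Theorem~\ref{t-mncore-conjugate} identifies conjugation of $(m,n)$-cores with the rank complement involution on $\cal D_{m,n}$, so self-conjugate cores correspond bijectively to rank-complement-fixed rational Dyck paths. The problem thus reduces to counting the fixed points of rank complement on $\cal D_{m,n}$.

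First, I would use the definition of rank complement from Section~\ref{sec:Rank Complement} to give a combinatorial characterization of a fixed path. Since rank complement reverses (and suitably negates) the ordered sequence of ranks attached to the steps of a Dyck path, a fixed path is one whose rank data is palindromic. Such a path is determined by its ``first half'', together with a central piece forced by the involutive symmetry. Because $\gcd(m,n)=1$, exactly one of $m,n$ is even (the other odd), or both are odd, which gives two parity cases for the shape of the forced central piece; in each case the central piece is determined by rank complement alone and is not a free parameter.

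Next, I would exhibit an explicit bijection between these ``first halves'' and unrestricted north/east lattice paths from $(0,0)$ to $(\lfloor m/2\rfloor, \lfloor n/2\rfloor)$. Unrestricted lattice paths in this rectangle are counted by the multinomial
\[
\binom{\lfloor m/2\rfloor + \lfloor n/2\rfloor}{\lfloor m/2\rfloor ,\lfloor n/2\rfloor},
\]
which is exactly the formula asserted in the theorem. The naturalness of the multinomial (as opposed to a Dyck-like or Catalan-like count) strongly suggests that the half-data encodes a free choice of roughly half the ranks from the totally ordered, symmetrically paired pool of all ranks, with the other half then determined.

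The main obstacle is verifying that the half-path-to-fixed-Dyck-path extension is genuinely a bijection. The easy direction is that the first half of any rank-complement-fixed Dyck path yields such a lattice path. The nontrivial direction is that \emph{every} lattice path to $(\lfloor m/2\rfloor, \lfloor n/2\rfloor)$ extends, via the forced rank complement image of its ranks, to a path that still lies weakly above the diagonal $y=nx/m$. Here coprimality is essential: it ensures the diagonal carries no interior lattice point and that the residues of ranks behave uniformly, so that a hypothetical violation of the Dyck condition in the extension could be traced back to an impossibility for the endpoint of the half-path. A careful case analysis by the parity of $m$ and $n$, combined with the structural properties of rank complement established in Section~\ref{sec:Rank Complement}, should close the argument.
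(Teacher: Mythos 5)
Your overall architecture is exactly the paper's: reduce via Theorem \ref{t-mncore-conjugate} to counting fixed points of rank complement on $\mathcal{D}_{m,n}$, then biject those fixed points with unrestricted lattice paths to $(\lfloor m/2\rfloor,\lfloor n/2\rfloor)$ (the paper's Theorem \ref{t-self-rank-complement}). But your structural description of the fixed points is wrong. Rank complement is not reversal of the rank sequence along the path: it is reversal composed with the cyclic rotation that splits $D$ at its highest rank, $\bar D=(Q_2Q_1)^{rev}=Q_1^{rev}Q_2^{rev}$ where $D=Q_1Q_2$. Hence $D=\bar D$ forces $Q_1=Q_1^{rev}$ and $Q_2=Q_2^{rev}$: a fixed path is a concatenation of \emph{two} palindromes with two independent centers, not a single path with palindromic rank data and one central piece. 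Taken literally, your picture fails when $m$ and $n$ are both odd: by the paper's Equation \eqref{e-self-reversing} there are then \emph{no} palindromic paths in $\mathcal{F}_{m,n}$ at all, while the theorem's count is positive; the two-palindrome structure is what saves this case, since one center can absorb the odd $N$ step and the other the odd $E$ step (the paper's half steps $N^{1/2}$, $E^{1/2}$). Also, the free data determining a fixed path is the middle segment $Q=Q_1Q_2^{rev}$ running between the two palindrome centers, not a ``first half''.

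The more serious gap is in what you call the nontrivial direction. Your claimed statement --- that every lattice path to $(\lfloor m/2\rfloor,\lfloor n/2\rfloor)$ extends, by the forced symmetry, to a path weakly above the diagonal --- is false, and your proposed remedy cannot work: whether the symmetrized path $ {Q_1'}^{rev}Q_1'{Q_2'}^{rev}Q_2'$ has nonnegative ranks depends on the \emph{interior} rank profile of the half-path $Q'$, not on its endpoint, so no violation can be ``traced back to an impossibility for the endpoint''; indeed $Q'$ and ${Q'}^{rev}$ share endpoints yet at most one of them doubles to a Dyck path. The missing idea, which is the heart of the paper's proof, is a reversal normalization: since $r({Q'}^{rev})=-r(Q')$, one may always replace $Q'$ by its reverse so that $\max r(Q')+\min r(Q')>0$; splitting the normalized $Q'$ at its highest rank and doubling then yields minimum rank $0$, i.e., a genuine Dyck path, and the paper shows $D'$ is self-rank-complementary precisely under this normalization. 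Correspondingly, the forward map must record which representative was used --- the paper's $\Psi$ outputs $Q$ or $Q^{rev}$ according to whether the half steps sit at the start/end --- and it is this bookkeeping via the half-step positions that makes the correspondence a bijection onto \emph{all} unrestricted paths rather than a two-to-one or partially defined map. Without the normalization and this bookkeeping, your proposed bijection is neither well-defined nor surjective, so the argument as sketched does not close.
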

The original proof of Theory \ref{c-num-self-conjugate-mncore} gives a direct bijection from self-conjugate $(m,n)$-cores to lattice paths from $(0,0)$ to $(\lfloor m/2 \rfloor, \lfloor n/2 \rfloor)$. It
is based on the fact that self conjugate partitions are uniquely determined by their hook lengths of the main diagonal. These main diagonal hook lengths are characterized by certain conditions for self conjugate $n$-cores (hence $(m,n)$-cores).

Rational Catalan polynomials are $q$-analogous of $\Cat_{m,n}$ defined by
$$\Cat_{m,n}(q)= \frac{1}{[m+n]_q} \qbinom{m+n}{m,n}=\frac{[m+n-1]_q!}{[m]_q![n]_q!}, $$
where $[a]_q=1+q+\cdots +q^{a-1}$ and $[a]_q! = [a]_q\cdots [1]_q$.
It is a nontrivial fact that $\Cat_{m,n}(q)$ is a polynomial in $q$ with $\N$ coefficients when $m$ and $n$ are coprime.
For this fact, Haiman gives an algebraic proof
 in \cite[Prop. 2.5.2]{Haiman} and also gives an algebraic interpretation for these polynomials
as the Hilbert series of a suitable quotient ring of a polynomial ring \cite[Prop. 2.5.3 and 2.5.4]{Haiman}. 
This polynomial also appears to be connected to certain modules arising in the theory of rational
Cherednik algebras. Recently, it is shown in \cite{Garsia-Leven-Wallach-Xin} that the modified version
$[\gcd(m,n)]_q \Cat_{m,n}(q)$ for
arbitrary $m$ and $n$ is also a polynomial in $q$ with $\N$ coefficients.

An interesting problem is to explain the fact by finding a combinatorial interpretation of the rational Catalan polynomial $\Cat_{m,n}(q)$, especially of the form
$$ \Cat_{m,n}(q)= \sum_{D\in  X_{m,n}}  q^{\mbox{stat}(D)},$$
where stat is certain statistic on $D$, and $X_{m,n}$ is a set counted by $\Cat_{m,n}$. Such a solution was only known for the classical case by MacMahon \cite{MacMahon} using the major index, and for the $m=kn+1$ case by \cite{Loehr-higher-qtCatalan} using the area and bounce statistic. In the general case, there are two conjectured formulas. One came up as a consequence of a more general rational Shuffle conjecture from algebraic combinatorics.
\begin{conj}
\label{conj-dinv-catalan}
 Let $(m,n)$ be a coprime pair of positive integers. Then we have
\begin{align}
\sum_{D \in \cal D_{m,n}} q^{\area(D)+\codinv(D)} =\Cat_{m,n}(q)= \frac{1}{[m+n]_q} \qbinom{m+n}{m,n}
\end{align}
where the sum ranges over all $(m,n)$-Dyck paths $D$, $\area(D)$ is the number of lattice squares between $D$ and the main diagonal, and $\codinv(D)$ is a Dyck path statistic that can be given a simple geometric construction.
\end{conj}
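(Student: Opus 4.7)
The plan is to prove the identity by strong induction on $m+n$, using Anderson's bijection to transport the bistatistic $\area + \codinv$ to an intrinsic statistic on $(m,n)$-cores and exploiting the rank-complement theorem to control it under conjugation.

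First, I would translate the bistatistic across Anderson's bijection $\alpha$ to obtain intrinsic expressions $A(\lambda) := \area(\alpha^{-1}(\lambda))$ and $C(\lambda) := \codinv(\alpha^{-1}(\lambda))$ for each $(m,n)$-core $\lambda$. The area $A(\lambda)$ is well known to be a simple hook-length count. For $C(\lambda)$, I would apply Theorem \ref{t-mncore-conjugate}: since rank complement of Dyck paths corresponds to core conjugation, the $\codinv$ statistic and its rank-complemented counterpart pair up naturally between $\lambda$ and $\lambda'$. The target relation, guided by the classical case, is a symmetry of the form $A(\lambda) + C(\lambda) = A(\lambda') + C(\lambda')$ modulo a global constant, which would immediately yield the palindromic symmetry $q^{N} \Cat_{m,n}(1/q) = \Cat_{m,n}(q)$ manifestly enjoyed by the right-hand side.

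Second, with a clean core-side formula for $A+C$ in hand, I would set up a recursion for the Dyck-path generating function parallel to the $q$-Pascal recursion
\begin{align*}
\qbinom{m+n}{m,n} = \qbinom{m+n-1}{m-1,n} + q^{m} \qbinom{m+n-1}{m,n-1},
\end{align*}
suitably combined with the $1/[m+n]_q$ factor to give a recursion for $\Cat_{m,n}(q)$. On the combinatorial side, I would decompose $\cal D_{m,n}$ according to the last step of $D$: removing an east or north step yields an $(m-1,n)$- or $(m,n-1)$-Dyck path (after trimming a boundary strip). The area contribution of the removed step is immediate; the codinv contribution is exactly where the core-side formula of Step 1 enters, as local path surgery translates into controlled changes of the hook-pair count on the corresponding core. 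The base case $\min(m,n) = 1$ is trivial since $\cal D_{1,n}$ is a singleton.

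The central obstacle is the global nature of $\codinv$: it is defined as a count over all pairs of cells with prescribed arm/leg relations, and a naive last-step decomposition of $D$ perturbs many such pairs simultaneously. The rank-complement theorem mitigates this by allowing one to replace pair counts for $\lambda$ by conjugate pair counts for $\lambda'$, effectively reducing the recursion to a single hook-pair identity on $(m,n)$-cores rather than a brute local count on Dyck paths. Even so, this conjecture is known to be a specialization of the rational Shuffle conjecture, and a fully combinatorial proof along these lines would be a substantial achievement; the realistic outcome of the plan is to isolate the minimal hook-pair identity on $(m,n)$-cores that one must establish to close the argument, which should then be amenable to techniques akin to those used for the self-conjugate count proved earlier in the paper.
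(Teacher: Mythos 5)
You have attempted to prove a statement that the paper itself does not prove and explicitly presents as open: Conjecture \ref{conj-dinv-catalan} is a specialization of the rational Shuffle conjecture, and the paper's actual contribution regarding it is only Theorem \ref{t-skewlength-codinv}, which shows $\codinv(D)=s\ell(\alpha(D))$ (combined with the easy fact $\area(D)=\ell(\lambda)$) and thereby proves the \emph{equivalence} of Conjectures \ref{conj-dinv-catalan} and \ref{conj-mncore} --- not either conjecture itself. So there is no paper proof to match, and your proposal, by your own admission in its final sentence, does not close the argument either; it is a research plan whose key identity is left unestablished. That is the fundamental gap: Step 2 never produces the ``clean core-side formula'' it presupposes, and isolating ``the minimal hook-pair identity one must establish'' is a restatement of the problem, not a solution.

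Beyond that, the concrete steps of the plan fail for identifiable reasons. First, the last-step decomposition of $\mathcal{D}_{m,n}$ is degenerate: since $(m,n-1)$ lies strictly below the line $y=nx/m$, every $(m,n)$-Dyck path ends with an E step (and begins with an N step), so conditioning on the last step gives no dichotomy; moreover, deleting that step leaves a path constrained by the \emph{old} diagonal of slope $n/m$, not $n/(m-1)$, so the trimmed object is not an $(m-1,n)$-Dyck path, and for coprime $(m,n)$ the path touches the diagonal only at its endpoints, so there is also no factorization at returns --- this absence of recursive structure is precisely why the rational case resists the classical arguments. Second, the pairs $(m-1,n)$ and $(m,n-1)$ need not be coprime, so $\Cat_{m-1,n}(q)$ need not even be a polynomial and the induction hypothesis does not apply; the $q$-Pascal recursion for $\qbinom{m+n}{m,n}$ does not descend through the prefactor $1/[m+n]_q$. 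Third, the symmetry you extract in Step 1 from Theorem \ref{t-mncore-conjugate} (together with the paper's corollary that $\dinv(\bar D)=\dinv(D)$) yields at best palindromicity of the combinatorial generating function, which is a necessary condition satisfied by $\Cat_{m,n}(q)$ but nowhere near sufficient to identify the two polynomials. If you want to engage with what the paper actually establishes, the productive target is Theorem \ref{t-skewlength-codinv}: its proof runs through the reformulation $\codinv(D)=\#\{(r_s,r_w): r_s\in S(D),\ r_w\in W(D),\ r_s<r_w\}$ from Theorem \ref{t-Phi-dinv} and the $n$-flush machinery of Theorem \ref{t-SH-complement}, reducing the skew-length to the same rank-pair count.
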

The conjecture was also made in \cite[Conjecture 6]{Amstrong-Catalan}, where dinv is denoted $h^+_{m,n}$.

The other conjecture was made in the context of partitions by  Armstrong et al. \cite{Armstrong-mn-core}, where they
introduced the statistics length $\ell(\lambda)$ and skew-length $s\ell(\lambda)$ of an $(m,n)$-core $\lambda$.
\begin{conj}\cite{Armstrong-mn-core}
\label{conj-mncore}
 Let $m$ and $n$ be coprime positive integers. Then we have
\begin{align}
\sum_{\lambda \in \cal P_{m,n}} q^{\ell(\lambda)+s\ell(\lambda)} =\Cat_{m,n}(q)= \frac{1}{[m+n]_q} \qbinom{m+n}{m,n},
\end{align}
where the sum ranges over all $(m,n)$-cores $\lambda$.
\end{conj}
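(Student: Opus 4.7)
The plan is to reduce Conjecture~\ref{conj-mncore} to an identity on rational Dyck paths via Anderson's bijection $\alpha:\cal D_{m,n}\to\cal P_{m,n}$, and then prove that identity by combining the rank-complement/conjugation symmetry of Theorem~\ref{t-mncore-conjugate} with a cycle-lemma argument in the spirit of MacMahon's classical proof. First I would transport the statistic $\ell(\lambda)+s\ell(\lambda)$ along $\alpha$, producing an explicit statistic $\stat(D)=\ell(\alpha(D))+s\ell(\alpha(D))$ on $(m,n)$-Dyck paths. The number of parts $\ell(\lambda)$ becomes the count of east steps of $D$ above the diagonal, while the cells counted by $s\ell(\lambda)$ translate, via the geometric description of $s\ell$ in \cite{Armstrong-mn-core}, to a weighted count of lattice squares between $D$ and the diagonal.

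Multiplying both sides by $[m+n]_q$, the target identity becomes
\begin{align}
[m+n]_q \sum_{D \in \cal D_{m,n}} q^{\stat(D)}=\qbinom{m+n}{m}.
\end{align}
Because $\gcd(m,n)=1$, the cyclic group $\Z/(m+n)\Z$ acts freely on the set $\cal L_{m,n}$ of $N/E$-words with $n$ $N$-steps and $m$ $E$-steps, and the Dvoretzky--Motzkin cycle lemma says that each orbit contains exactly one element of $\cal D_{m,n}$. I would therefore try to extend $\stat$ to a statistic $\widetilde{\stat}$ on $\cal L_{m,n}$ such that (i) $\widetilde{\stat}$ restricts to $\stat$ on Dyck paths, (ii) $\widetilde{\stat}$ is Mahonian, i.e.\ $\sum_{w\in\cal L_{m,n}}q^{\widetilde{\stat}(w)}=\qbinom{m+n}{m}$, and (iii) on every cyclic orbit the multiset of values of $\widetilde{\stat}$ is an arithmetic progression of common difference $1$ and length $m+n$. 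Properties (ii) and (iii) together yield the displayed identity after division by $[m+n]_q$.

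At this stage the rank-complement symmetry of Theorem~\ref{t-mncore-conjugate} enters as an involution of $\cal D_{m,n}$ realizing core conjugation $\lambda\mapsto\lambda'$. Since conjugation swaps $\ell(\lambda)$ with the largest part of $\lambda$, and $s\ell$ is expected to interact predictably with conjugation, one obtains a clean transformation rule for $\stat$ under this involution; in particular the verification of (iii) should reduce to a fundamental domain of cyclic orbits, matching the manifest symmetry $\Cat_{m,n}(q)=\Cat_{n,m}(q)$. A natural route to (ii) is then an induction on $m+n$ using the $q$-Pascal recursion $\qbinom{m+n}{m}=\qbinom{m+n-1}{m-1}+q^m\qbinom{m+n-1}{m}$, combined with a decomposition of $(m,n)$-cores matching this recursion (for example, by the presence or absence of the hook corresponding to the last step of $D$).

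The main obstacle is property (iii): controlling $\widetilde{\stat}$ under a single cyclic rotation of the underlying lattice word. The statistic $s\ell$ is defined globally on the Young diagram of $\lambda$, and cyclic rotation induces, through Anderson's bijection, a nontrivial reshuffling of the beta-set of $\lambda$ for which no clean recursive description is currently known. This is precisely the same obstruction that has left the companion Conjecture~\ref{conj-dinv-catalan} open, and it is the step where Loehr's proof for $m=kn+1$ relies on special arithmetic that breaks in the general coprime case. A realistic path to completing the plan is therefore either to find a new combinatorial incarnation of $\stat$ (perhaps transported to parking functions or to a word on $\{0,\dots,m+n-1\}$) that linearizes the cyclic action, or to input an algebraic realization of $\Cat_{m,n}(q)$ as a graded character of a module over a rational Cherednik algebra, together with a grading-preserving filtration indexed by $(m,n)$-cores.
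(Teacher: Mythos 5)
You have not proved the statement, and neither does the paper: Conjecture~\ref{conj-mncore} is presented there as an open conjecture of Armstrong--Hanusa--Jones. What the paper actually establishes is Theorem~\ref{t-skewlength-codinv}, namely that under Anderson's bijection $\alpha$ one has $s\ell(\alpha(D))=\codinv(D)$ and $\ell(\alpha(D))=\area(D)$, so that Conjecture~\ref{conj-mncore} is \emph{equivalent} to Conjecture~\ref{conj-dinv-catalan}; both remain open, being the $t=1/q$ specialization of the rational shuffle conjecture. Your proposal is explicitly a program rather than a proof, and the unfilled step is exactly the heart of the problem. Your property (iii) demands a statistic $\widetilde{\stat}$ on all of $\cal L_{m,n}$ whose multiset of values on each free $\Z/(m+n)\Z$-orbit is $\{c,c+1,\dots,c+m+n-1\}$; no such statistic is known, and the natural Mahonian candidate (major index) satisfies only the much weaker property of being equidistributed \emph{modulo} $m+n$ on each cyclic orbit, which lets one divide cardinalities by $m+n$ but does not let one divide the $q$-generating function by $[m+n]_q$ orbitwise. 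Producing a statistic with (i)--(iii) would immediately prove the conjecture, so deferring it to ``a new combinatorial incarnation of $\stat$'' or to Cherednik-algebra input concedes the entire difficulty; this is the same obstruction the paper points out when noting that even the classical-case specializations of the rational shuffle conjecture are open.

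Two of your translations along $\alpha$ are also inaccurate and would need repair even within your program. First, $\ell(\lambda)=|H_\lambda|$ is the number of lattice points with positive rank to the right of $D$, i.e.\ $\ell(\lambda)=\area(D)$, the number of lattice boxes between $D$ and the diagonal --- not ``the count of east steps of $D$ above the diagonal.'' Second, $s\ell(\lambda)$ does not become a weighted count of area cells: by the paper's computation it equals
\begin{align*}
\codinv(D)=\#\{(r_s,r_w)\,:\, r_s\in S(D),\ r_w\in W(D),\ r_s<r_w\},
\end{align*}
a count of pairs of ranks of south and west ends, tied to the sweep map via $\codinv(D)=\coarea(\Phi(D))$ (Theorem~\ref{t-Phi-dinv}). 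Finally, your intended use of the rank-complement involution is miscalibrated: Theorem~\ref{t-skewlength-codinv} shows $s\ell$ is conjugation-invariant, but $\ell(\lambda^T)=\lambda_1\neq\ell(\lambda)$ in general, so $\stat=\ell+s\ell$ is \emph{not} preserved by the involution of Theorem~\ref{t-mncore-conjugate}, and the hoped-for reduction of (iii) to a fundamental domain of orbits does not go through as described. If you want a correct, complete result extractable from your circle of ideas, it is precisely the paper's equivalence statement: transporting the statistic by $\alpha$ and invoking Theorems~\ref{t-Phi-dinv} and~\ref{t-SH-complement} shows $\sum_{\lambda}q^{\ell(\lambda)+s\ell(\lambda)}=\sum_{D}q^{\area(D)+\codinv(D)}$, but not that either sum equals $\Cat_{m,n}(q)$.
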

Both conjectured formulas are specialization of rational $q,t$-Catalan numbers at $t=1/q$. The similarity of the two conjectures suggests that they are equivalent.
Indeed,
our major contribution is Theorem \ref{t-skewlength-codinv}, which shows that under Anderson's $\alpha$ map, $\codinv(D)$ of a Dyck paht $D$ is taken to $s\ell (\lambda)$
of the $(m,n)$-core $\lambda=\alpha(D)$. Since it is clear that $\area(D)=\ell(\lambda)$, we prove the equivalence of Conjectures \ref{conj-dinv-catalan} and \ref{conj-mncore}.

It is worth mentioning the well-known
Shuffle conjecture, which represents the Hilbert series of the diagonal Harmonics as a combinatorial sum over certain labeled Dyck paths, called parking functions. Shuffle conjecture is currently one of the major open problems in algebraic combinatorics. Since it was formulated in 2003 and published in \cite{Haglund-Shuffle}, little progress was made until a refinement of the conjecture was found 5 years later and published in \cite{classComp}. Three special cases of the refined shuffle conjecture have been proved in \cite{shuffle-he}, \cite{shuffle-hh}, \cite{shuffle-hhe}, but the method becomes more and more complicated for the general situation.

While the classical shuffle conjecture is too hard to attack, its natural extension, the rational shuffle conjecture, has been discovered recently. The conjecture was found to be connected with many other area of mathematics, such as: the Elliptic Hall Algebra of Burban-Shiffmann-Vasserot, the Algebraic Geometry of Springer Fibers of Hikita,
the Double Affine Hecke Algebras of Cherednik, the HOMFLY polynomials, and the truly fascinating Shuffle
Algebra of symmetric functions. Its specializations at $t = q^{-1}$ (including the classical cases) are still
open to this date. What makes this specialization particularly fascinating is that both sides of the stated
identities have combinatorial interpretations. As a consequence, Conjecture \ref{conj-dinv-catalan} is already quite challenging.

The paper is organized as follows. Section 1 is this introduction. Section \ref{sec:Rank Complement} introduces notations on rational Dyck paths and the rank complement transformation. In trying to understand its relation with conjugation of $(m,n)$-cores, we find
Theorem \ref{t-SH-complement} in Section \ref{sec:n-core}, which gives a relation between $n$-cores $\lambda$ and it conjugate $\lambda^T$.
This leads to a new approach to characterizing $n$-core partitions. In Section \ref{sec:conjugation-rank-complement}, we introduce Anderson's bijection between $(m,n)$-Dyck paths and $(m,n)$-cores and give a new proof of Theorem \ref{c-num-self-conjugate-mncore}. In Section \ref{sec:dinv-skew-length}, we introduce the dinv statistic and establish the equivalence of Conjectures \ref{conj-dinv-catalan} and \ref{conj-mncore}.

\section{Rational Dyck paths and the rank complement transformation \label{sec:Rank Complement}}
\subsection{Notations}
In this note, a path $P=p_1\cdots p_{n}$ is an $(N,E)$-sequence depicted in the plane as a sequence of lattice points $(P_0,\dots, P_{n})$ such that
$P_0=(0,0)$ (if not specified) and $P_i-P_{i-1}$ equals $(0,1)$ if $p_i=N$ and $(1,0)$ if $p_i=E$. Thus $N$ simply means going north and $E$ means going east. The product of two paths $P$ and $Q$
are the juxtaposition $PQ$ of the NE sequences of $P$ and $Q$. Or equivalently, $PQ$ is the path obtained by putting the starting point of $Q$ at the end point of $P$.

Let $m$ and $n$ be positive integers. We denote by $\cal F_{m,n}$ the set of (free) paths from $(0,0)$ to $(m,n)$.
An $(m,n)$-Dyck path $D$ is a path in $\cal F_{m,n}$ that never goes below the diagonal line $y=\frac{n}{m} x$. We denote by $\cal D_{m,n}$ the set of all such rational (slope) Dyck paths.
Define the rank of a lattice point $(a,b)$ by $r(a,b)=mb-na$. When $(m,n)$ is a coprime pair, all the lattice points $\{(a,b): 0\le a \le m, \ 0\le b \le n\}$ in the $m$ by $n$ rectangle are in bijection with their ranks except for  $r(0,0)=r(m,n)=0$. Thus we can encode a path $P\in \cal F_{m,n}$ by its rank set $r(P)=\{ r(P_0),\dots, r(P_{m+n-1}) \}= \{r(P_1),\dots, r(P_{m+n})\} $ of $m+n$ distinct ranks. The ranks can also be defined recursively by
$r(P_0)=0$ and for $i=1,\dots, m+n$, $r(P_i)=r(P_{i-1}) +m$ if $p_i=N$ and $r(P_i)=r(P_{i-1}) -n$ if $p_i=E$. Then $P\in \cal F_{m,n}$
is a Dyck path if and only if all its ranks are nonnegative.

Denote by $S(D)$ the set of south ends of $D$, i.e, starting points of an N step of $D$. By abuse of notation, it is convenient to denote also by $S(D)=\{ s_0,s_1,\dots, s_{n-1} \}$ the set of the ranks of the south ends of $D$. We claim that ${ S(D)} \bmod{n} =\{0,1,\dots, n-1\}$. This is because
two lattice points with the same $y$-coordinate must have the same rank when modulo $n$, and the $y$-coordinates of the $s_i$ are all different.
We can similarly define $E(D),N(D),W(D)$ to be the ranks of the east ends, north ends, and west ends of $D$ respectively. We have the following equalities:
\begin{align}
  E(D)&=W(D)-n, \qquad S(D)=N(D)-m, \label{e-EW}\\
  r(D)&=S(D)\uplus W(D)= E(D)\uplus N(D), \label{e-SW}
\end{align}
where $\uplus$ means disjoint union. Equations in  \eqref{e-EW} are obvious; Equations in \eqref{e-SW} follow from the fact that each node of $D$ is either a south end or a west end, and similarly is either an east end or a north end. Note that $0$ is a south end as the node $(0,0)$ and an east end as the node $(m,n)$.
On the other hand, given the rank sequence $r(D)$, one can easily reconstruct the Dyck path $D$. For a rank $r\in r(D)$, it is in $S(D)$ if and only if $r+m\in r(D)$, and similar results hold for $W(D)$, $N(D)$ and $E(D)$.

\subsection{Transformation of lattice paths}
There are two elementary transformations of paths (not necessarily in $\cal F_{m,n}$). Denote by $P^{rev}$ the reverse path of $P$  obtained by reversing the NE-sequence of $P$, without changing the starting and ending point. Geometrically, this corresponds to rotating $P$ by $180$ degrees. The rank sequence of $P$ and $P^{rev}$ are related by
$$ r(P^{rev})=r(\text{ end point of }P)+r(\text{ start point of } P) -r(P).$$

Denote the transpose of $P$ by $P^T = P^{rev} \big|_{N=E,E=N}$. Geometrically, $P^{T}$ is obtained from $P$ by flip along the line $y=-x$ by 180 degrees.
The following two results are immediate. 
\begin{lem}\label{l-mn-switch}
Suppose $m$ and $n$ are coprime. A set $R=\{r_0,\dots, r_{m+n-1}\}$ is the rank sequence of a path $P\in \cal F_{m,n}$ if and only if it is also the rank sequence of
the transpose $P^T\in \cal F_{n,m}$ of $P$.  The result still holds if we restrict to nonnegative rank sequences and Dyck paths.
\end{lem}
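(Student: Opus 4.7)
The plan is to verify the lemma by a direct rank computation on the lattice points visited by $P^T$. I will first identify those points explicitly in terms of the lattice points of $P$, then check that the rank of $P^T_i$ as a point of the $n\times m$ rectangle coincides with the rank of $P_{m+n-i}$ as a point of the $m\times n$ rectangle.

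Write $P_i=(a_i,b_i)$ for $i=0,\dots,m+n$. Because $P^{rev}$ has the same endpoints as $P$, its $i$-th lattice point is $(m,n)-P_{m+n-i}=(m-a_{m+n-i},\,n-b_{m+n-i})$. The transpose $P^T$ is obtained from $P^{rev}$ by swapping $N$ and $E$ in the step sequence, and on lattice points this swap interchanges $x$- and $y$-coordinates; hence $P^T_i=(n-b_{m+n-i},\,m-a_{m+n-i})\in\cal F_{n,m}$. Writing the rank function on the $n\times m$ rectangle as $r'(x,y)=ny-mx$, a direct calculation gives
\[
r'(P^T_i)=n(m-a_{m+n-i})-m(n-b_{m+n-i})=m\,b_{m+n-i}-n\,a_{m+n-i}=r(P_{m+n-i}).
\]
So the rank sequence of $P^T$ is the reverse of that of $P$, and in particular the two rank sets coincide. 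The iff then follows from the fact already recorded in the paper that, for coprime $(m,n)$, a path in $\cal F_{m,n}$ (respectively $\cal F_{n,m}$) is uniquely determined by its rank set.

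For the restriction to Dyck paths, a path is Dyck precisely when every element of its rank set is nonnegative; since the rank sets of $P$ and $P^T$ coincide, $P$ is Dyck iff $P^T$ is Dyck, so the correspondence restricts.

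The only real obstacle is the bookkeeping of how the composite of reversal and $N/E$ swap acts on lattice-point coordinates; once the explicit formula for $P^T_i$ is in place, the rank identity reduces to a line of algebra, and the $\mathrm{lhs}=\mathrm{rhs}$ symmetry between $(m,n)$ and $(n,m)$ in the final expression $mb-na\leftrightarrow ny-mx$ is precisely the clean cancellation responsible for the lemma.
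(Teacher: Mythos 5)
Your proof is correct: the explicit formula $P^T_i=(n-b_{m+n-i},\,m-a_{m+n-i})$ and the one-line computation $r'(P^T_i)=r(P_{m+n-i})$ are precisely the verification behind the paper's claim, which is stated without proof (the paper simply calls this lemma immediate, relying on the geometric description of $P^T$ as a flip of $P$). Your handling of the Dyck-path restriction via the nonnegativity criterion also matches the paper's framework exactly.
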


\begin{lem}\label{l-R-bar}
Suppose $m$ and $n$ are coprime.
A set $R=\{r_0,\dots, r_{m+n-1}\}$ is the rank sequence of a path $P\in \cal F_{m,n}$ if and only if $-R=\{-r_0,\dots,-r_{m+n-1}\}$ is the rank sequence of the path $P^{rev}\in \cal F_{m,n}$.
\end{lem}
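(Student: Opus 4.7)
The plan is to derive Lemma \ref{l-R-bar} as a direct specialization of the general reversal formula already recorded in the text, namely
\[
r(P^{rev}) = r(\text{end point of } P) + r(\text{start point of } P) - r(P).
\]
For any path $P \in \cal F_{m,n}$, the start point $(0,0)$ has rank $0$ and the end point $(m,n)$ has rank $m\cdot n - n\cdot m = 0$. Substituting these two zeros into the formula yields $r(P^{rev}) = -r(P)$ at every lattice point along the path, so the rank set of $P^{rev}$ is exactly $-R$. Observe also that $P^{rev} \in \cal F_{m,n}$ since reversal preserves start and end points by definition. This handles the forward direction.

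For the converse, the idea is to exploit that reversal is an involution: $(P^{rev})^{rev} = P$. If $-R$ is the rank set of some $P' \in \cal F_{m,n}$, then applying the forward direction to $P'$ shows that $-(-R) = R$ is the rank set of $(P')^{rev}$, and we take $P = (P')^{rev}$. Since $(m,n)$ is coprime, the ranks distinguish all lattice points of the rectangle (except the two corners, which both have rank $0$), so the rank set uniquely reconstructs the path, as noted in the preceding subsection; hence the correspondence is unambiguous.

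In truth there is no real obstacle here. The only step requiring any actual work is the general reversal formula itself, which is routine: writing the lattice points of $P^{rev}$ as $Q_i = P_0 + P_{m+n} - P_{m+n-i}$, where $P_0, P_1, \ldots, P_{m+n}$ are the lattice points of $P$, and invoking the linearity of $r(a,b) = mb - na$ gives $r(Q_i) = r(P_0) + r(P_{m+n}) - r(P_{m+n-i})$ immediately. The lemma then reduces to a one-line substitution.
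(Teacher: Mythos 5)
Your proof is correct and follows exactly the route the paper intends: the paper states Lemma \ref{l-R-bar} as ``immediate'' from the reversal formula $r(P^{rev}) = r(\text{end point}) + r(\text{start point}) - r(P)$ given just above it, and your argument simply spells this out, using that both corners $(0,0)$ and $(m,n)$ have rank $0$ under $r(a,b)=mb-na$, together with the involutive property of reversal for the converse. Nothing is missing; you have merely made explicit the one-line substitution the paper leaves to the reader.
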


The reversing map is clearly an involution on $\cal F_{m,n}$. Its fixed points are self-reversing paths, or equivalently, palindromic NE-sequences. Then such $P$ can be factored as
$Q^{rev}\varepsilon Q$ for some path $Q$. Since $Q^{rev}$ and $Q$ have the same number of N steps and E steps, $\varepsilon$ is empty if $m$ and $n$ are both even, is N if $n$ is odd, and is E if $m$ is odd. We have
\begin{align}\label{e-self-reversing}
  \# \{P\in \cal F_{m,n} : P=P^{rev} \} = \left\{
                                       \begin{array}{ll}
                                         \displaystyle\binom{\lfloor m/2\rfloor +\lfloor n/2\rfloor}{ \lfloor m/2\rfloor ,\lfloor n/2\rfloor}, & \mbox{ if $m$ or $n$ is even}; \\
                                         0, & \mbox{if $m$ and $n$ are both odd.}
                                       \end{array}
                                     \right.
\end{align}

\subsection{Rank complement of rational Dyck paths}
The situation is a little subtle when restricted to the rank sequences of Dyck paths: $D^{rev}$ has negative ranks. To settle this problem, define the rank complement $\bar{D}$ of $D$ to be the path obtained by first sliding the part before the highest rank so that
the rank $0$ node $(0,0)$ becomes the other rank $0$ node $(m,n)$, then rotating the path by $180$ degrees, and finally shift the path so that the lowest rank becomes $0$.
To be more precise, if we write the NE-sequence of $D$ as $Q_1Q_2$ so that the end point of $Q_1$ has the highest rank, then $\bar D=(Q_2Q_1)^{rev}= Q_1^{rev}Q_2^{rev}$. See Figure \ref{fig:DyckpathTC} for an example.

\begin{figure}[ht]
\begin{center}
\begin{displaymath}
\mbox{\dessin{width=420pt}{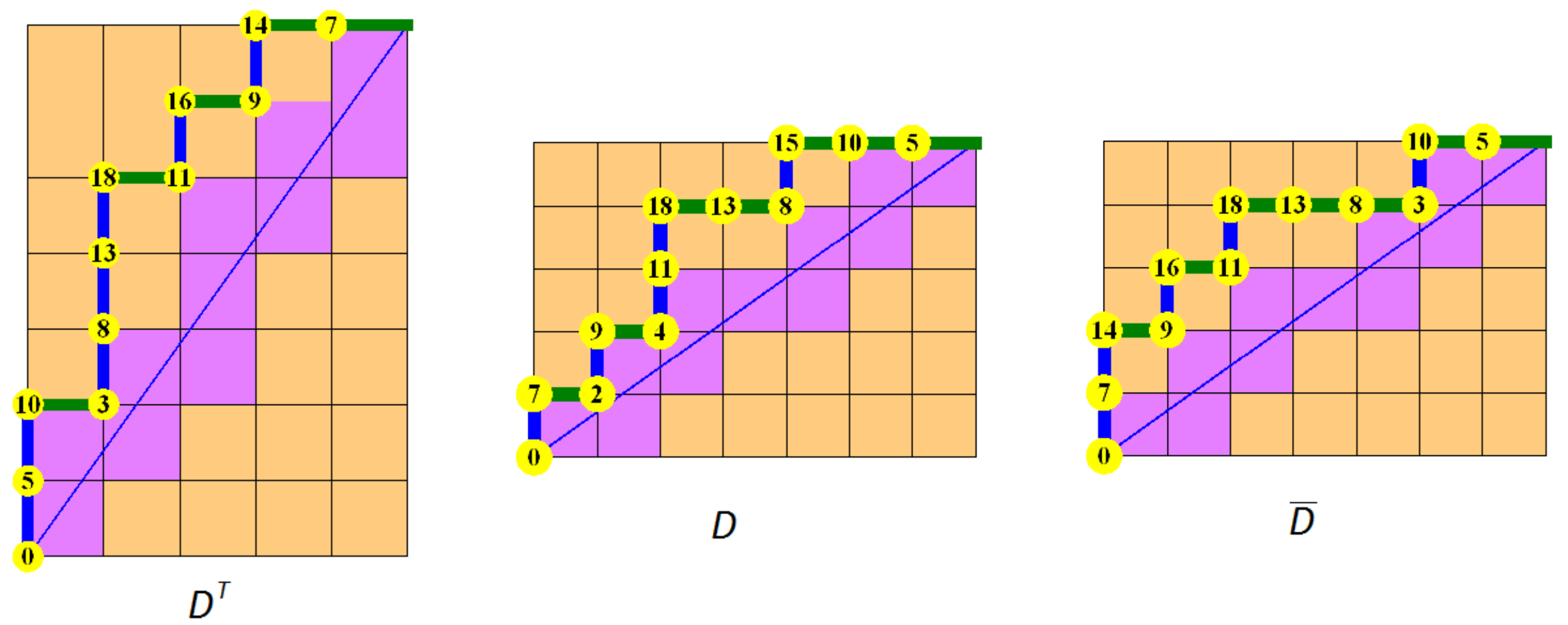}}
\end{displaymath}
\caption{The transpose $D^T$, the Dyck path $D$, and the rank complement $\bar D$.}
\label{fig:DyckpathTC}
\end{center}
\end{figure}

The following result justifies the name of ``rank complement".
\begin{lem}
Suppose $m$ and $n$ are coprime positive integers.
A set $R=\{r_0,\dots, r_{m+n-1}\}$ is the rank sequence of a Dyck path $D$ if and only if its rank complement $\bar{R}=M-R=\{M-r_0,\dots,M-r_{m+n-1}\}$, where $M=\max R$,
is the rank sequence of $\bar D$.
\end{lem}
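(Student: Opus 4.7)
The plan is to verify the set identity $r(\bar D) = M - r(D)$ by a direct computation using the factorization $\bar D = Q_1^{rev}Q_2^{rev}$ together with the reverse-path rank formula $r(P^{rev}) = r(\mathrm{start}) + r(\mathrm{end}) - r(P)$ already recorded in the text; the ``if and only if'' will then follow because the bar operation is an involution on both the set and the path sides.

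First I would write $D = Q_1 Q_2$ so that $Q_1$ runs from $(0,0)$ to the (unique) highest-rank vertex $P^*$ with $r(P^*) = M$, and $Q_2$ runs from $P^*$ to $(m,n)$. Since reversing an NE-sequence preserves endpoints, $Q_1^{rev}$ still runs $(0,0) \to P^*$ and $Q_2^{rev}$ still runs $P^* \to (m,n)$, so the concatenation $\bar D = Q_1^{rev}Q_2^{rev}$ is a well-defined path in $\cal F_{m,n}$. Applying the reverse-path formula to each factor, with $r((0,0)) = r((m,n)) = 0$ and $r(P^*) = M$, yields
\[
r(Q_1^{rev}) = M - r(Q_1), \qquad r(Q_2^{rev}) = M - r(Q_2).
\]
Taking the union (the two factors share only the vertex $P^*$, exactly as $Q_1$ and $Q_2$ do in $D$) then gives
\[
r(\bar D) = M - \bigl(r(Q_1) \cup r(Q_2)\bigr) = M - r(D) = M - R = \bar R.
\]
Nonnegativity of the ranks of $\bar D$ is automatic, since every $r \in R$ satisfies $0 \le r \le M$ and hence $M - r \ge 0$, so $\bar D$ is in fact a Dyck path.

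For the converse direction I would simply observe that the bar operation is an involution: because the minimum element of $R$ is $0$, we have $\max \bar R = M$, so $\overline{\bar R} = M - \bar R = R$, and correspondingly $\overline{\bar D} = D$ because a Dyck path is determined by its rank set (as noted in the Notations subsection). I do not expect a serious obstacle; the main thing to watch is the bookkeeping of shared vertices when unioning $r(Q_1)$ and $r(Q_2)$ — namely the split point $P^*$ and the two rank-$0$ endpoints $(0,0)$ and $(m,n)$ — but this repetition is absorbed identically on both sides of the identity, so it does not affect the set-level conclusion.
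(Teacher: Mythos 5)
Your proof is correct and takes essentially the same approach as the paper: the paper's own (alternative) argument establishes sufficiency via the factorization $\bar D = Q_1^{rev}Q_2^{rev}$ together with the observation $r(Q_i^{rev}) = M - r(Q_i)$ for $i=1,2$, and gets necessity from the involution property $\overline{\overline{R}} = R$, justified exactly as you do by $\max \bar R = M - \min R = M$. Your extra bookkeeping (uniqueness of the highest-rank vertex from coprimality, nonnegativity of $M-r$, and recovering the path from its rank set) just makes explicit details the paper leaves implicit.
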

\begin{proof}
For the sufficient part, we prove the equality $r(\bar D)=\overline{r(D)}$ in two ways. From the geometric construction, we notice that
sliding along the diagonal does not change the ranks, rotating the path gives the rank sequence $-R$ with minimum rank $-M$, and the final shifting will give $-R+M$, as desired.

The alternative proof is by using the formula $\bar D=Q_1^{rev} Q_2^{rev}$, where we split $D$ as $Q_1Q_2$, and regard $Q_2$ and $Q_2^{rev}$ as paths starting at the highest rank $M$. The desired formula follows by observing that $r(Q_i^{rev})=M-r(Q_i)$ for $i=1,2$.

The necessary part follows from the sufficient part and the fact $\bar {\bar{D}}=D$, or equivalently $\bar{\bar R}=R$, which is obtained by $\max \bar{R}=M-(\min R)=M$.
Alternatively, note that $Q_1^{rev}$ ends at the highest rank $M$ of $\bar D$, so that $\bar D=Q_1^{rev}Q_2^{rev}$ is the desired splitting at the highest rank.
\end{proof}

The rank complement transformation is an involution on $\cal D_{m,n}$, i.e., $\bar{\bar D}=D$. When restricted to the ranks of the south ends or east ends, we have the following result.
\begin{lem}\label{l-southendsComplement}
Suppose $m$ and $n$ are coprime and $D$ is an $(m,n)$-Dyck path. Then we have $S(\bar D)=\max(S(D)) -S(D) $ and  $E(\bar D)=\max(E(D)) - E(D)$.
\end{lem}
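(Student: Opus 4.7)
The approach is to deduce both identities directly from the preceding lemma's formula $r(\bar D) = M - r(D)$ with $M = \max r(D)$, combined with the rank-set criterion stated after \eqref{e-SW}: a rank $r \in r(D)$ lies in $S(D)$ if and only if $r + m \in r(D)$, in $N(D)$ if and only if $r - m \in r(D)$, and the analogous criteria (with $\pm n$ in place of $\pm m$) for $E(D)$ and $W(D)$.

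First I would translate the membership condition for $S(\bar D)$ through the bijection $r \leftrightarrow M - r$. Writing an arbitrary element of $r(\bar D)$ as $r' = M - r$ with $r \in r(D)$, the defining relation $r' + m \in r(\bar D)$ becomes $M - (r - m) \in M - r(D)$, which is equivalent to $r - m \in r(D)$, i.e.\ to $r \in N(D)$. This gives the intermediate formula
\[ S(\bar D) = M - N(D), \]
and substituting $N(D) = S(D) + m$ from \eqref{e-EW} yields $S(\bar D) = (M - m) - S(D)$.

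To finish, I would identify $M - m$ with $\max S(D)$. The key observation is that the unique vertex of rank $M$ must be entered by an N step and left by an E step: if its next step were N then rank $M + m > M$ would appear in $r(D)$, and if its previous step were E then rank $M + n > M$ would appear in $r(D)$, both contradicting maximality of $M$. Hence $M \in N(D) \cap W(D)$, and since $M$ is the overall maximum of $r(D)$ it is also the maximum of each of these subsets. Consequently $\max S(D) = \max N(D) - m = M - m$, which completes the first identity. The statement for $E(\bar D)$ is entirely parallel: the condition $r' + n \in r(\bar D)$ translates to $r - n \in r(D)$, i.e.\ to $r \in W(D)$, so $E(\bar D) = M - W(D) = (M - n) - E(D)$, and the same maximality argument gives $M - n = \max E(D)$.

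The argument is essentially bookkeeping once the previous lemma is in hand, and no genuine obstacle is expected. The only mildly delicate point is verifying that the top-ranked vertex $M$ is simultaneously a north end and a west end, which is what supplies the clean identifications $\max S(D) = M - m$ and $\max E(D) = M - n$ needed to turn the shift by $M$ into a shift by the expected maxima.
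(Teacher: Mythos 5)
Your proof is correct and takes essentially the same approach as the paper: both arguments rest on the rank-complement formula $r(\bar D)=M-r(D)$ together with the membership criterion that $r\in S(D)$ iff $r+m\in r(D)$ (and its analogues). The only differences are cosmetic --- you run the criterion as a biconditional to get $S(\bar D)=M-N(D)$ directly where the paper proves one containment and then appeals to the involution $\bar{\bar D}=D$, and you explicitly justify the identity $M=\max S(D)+m$ (via the top-ranked vertex being a north end and a west end) that the paper states as an observation.
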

\begin{proof}
We only prove the equality for south ends. The equality for east ends is similar.

  First observe that $M=\max r(D) =\max S(D)+m=M_s+m$. If $s_i$ is a south end, then $s_i+m \in r(D)$ is a north end of $D$. Thus $M-s_i$ and $M-(s_i+m)$ both belongs to $r(\bar D)$. It follows that
  $M-s_i-m=M_s-s_i$ is a south end of $\bar D$. Together with $\bar {\bar D}=D$, we deduce that $S(\bar D)= M_s-S(D)$, as desired.

  On can also see that south ends of $D$ become north ends of $\bar D$ by the geometric construction of $\bar D$.
\end{proof}

We conclude this section by the following analogous result of Equation \eqref{e-self-reversing}.
\begin{theo}\label{t-self-rank-complement}
  Let $(m,n)$ be a coprime pair. Then the number of self rank complement $(m,n)$-Dyck paths is given by
\begin{align}
  \label{e-self-rank-complement}
\# \{ D \in \cal D_{m,n} : D=\bar D\} = \displaystyle\binom{\lfloor m/2\rfloor +\lfloor n/2\rfloor}{ \lfloor m/2\rfloor ,\lfloor n/2\rfloor}.
\end{align}
\end{theo}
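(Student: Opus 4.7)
The plan is to unfold the palindromic structure forced by the self-rank-complement condition, and then to biject such Dyck paths with free lattice paths from $(0,0)$ to $(\lfloor m/2\rfloor,\lfloor n/2\rfloor)$, in analogy with the proof of \eqref{e-self-reversing}.

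First I would write $D=Q_1Q_2$ at the unique maximum-rank point $(a,b)$. Combined with the formula $\bar D=Q_1^{rev}Q_2^{rev}$ from the proof of the preceding lemma, and the observation that $Q_1^{rev}$ still ends at the highest-rank point of $\bar D$, the equation $D=\bar D$ forces both $Q_1$ and $Q_2$ to be palindromic NE-sequences. The rank identity $r_i+r_{k-i}=r_k$ along any palindromic segment starting from rank $0$ (a direct consequence of $p_i=p_{k+1-i}$) makes the maximum rank of $Q_1$ automatically unique, and the symmetric identity for $Q_2$ reduces the Dyckness of $D$ to one on $Q_1$ alone. I would then decompose $Q_1=w_1\epsilon_1w_1^{rev}$ and $Q_2=w_2\epsilon_2w_2^{rev}$, each $\epsilon_i$ empty or a single $N$/$E$ step, and carry out a parity case-analysis on $(m,n)$ (coprime, so at most one is even); in each case the middle steps are essentially forced, and a direct computation yields $|w_1|_E=\lfloor a/2\rfloor$, $|w_1|_N=\lfloor b/2\rfloor$, $|w_2|_E=\lfloor(m-a)/2\rfloor$, $|w_2|_N=\lfloor(n-b)/2\rfloor$. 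Consequently, the concatenation $w_1w_2$ is a free lattice path from $(0,0)$ to $(\lfloor m/2\rfloor,\lfloor n/2\rfloor)$.

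Second, I would define $\phi(D)=w_1w_2$ using a case-appropriate canonical convention to order the two palindromic halves (for instance, listing the half whose middle step is $N$ first when $n$ is odd, and the half whose middle step is $E$ first when $n$ is even), so that the labels $w_1,w_2$ are unambiguous. Injectivity of $\phi$ then follows from the fact that the split position and the $\epsilon_i$'s are recoverable from $W=\phi(D)$ via the convention together with the forced parities. For surjectivity, given $W\in\cal F_{\lfloor m/2\rfloor,\lfloor n/2\rfloor}$ I would locate the unique split $W=w_1w_2$ whose palindromic completion is Dyck: the palindromic rank identity reduces the global Dyck condition on the reconstructed $D$ to a single inequality on the running rank of the ascending portion $w_1$, and a monotonicity argument tracking how the peak rank of the completed path shifts as the split point slides along $W$ should pin down exactly one valid split. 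Combining this bijection with $|\cal F_{\lfloor m/2\rfloor,\lfloor n/2\rfloor}|=\binom{\lfloor m/2\rfloor+\lfloor n/2\rfloor}{\lfloor m/2\rfloor,\lfloor n/2\rfloor}$ will yield the theorem. The main obstacle will be establishing this uniqueness of the valid split and verifying its compatibility with the canonical sub-case selected by the ordering convention.
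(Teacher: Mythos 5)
Your forward construction coincides in substance with the paper's. The paper also splits a self-complementary $D$ at its unique highest-rank point into two palindromic pieces, writing $D=Q_1^{rev}Q_1Q_2^{rev}Q_2$ (allowing half steps $E^{1/2},N^{1/2}$, which play exactly the role of your middle steps $\epsilon_i$), and records the path $Q=Q_1Q_2^{rev}$ with the half steps deleted, a free path with $\lfloor m/2\rfloor$ E steps and $\lfloor n/2\rfloor$ N steps. Your observation that $D=\bar D$ forces both halves to be palindromic, and your parity analysis of the $\epsilon_i$ (coprimality excludes the bad cases, so $|w_1|_E=\lfloor a/2\rfloor$ etc.), are correct and match the paper's use of half steps. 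The one structural difference is that your recorded word $w_1w_2$ juxtaposes the \emph{first} and \emph{third} quarters of $D$, whereas the paper's $Q=Q_1Q_2^{rev}$ is the \emph{contiguous} middle half of $D$, running from the center of the first palindrome, through the apex, to the center of the second palindrome. This difference is not cosmetic: it is what makes the inverse map easy in the paper and hard in your version.

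The genuine gap is the one you flag yourself: the claim that each $W\in \mathcal{F}_{\lfloor m/2\rfloor,\lfloor n/2\rfloor}$ admits exactly one split $W=w_1w_2$ (and one admissible ordering of the halves) whose palindromic completion is a Dyck path. The ``monotonicity of the peak rank as the split point slides'' is not an argument yet, and nothing in your sketch excludes a single $W$ having Dyck completions under both orders $w_1w_2$ and $w_2w_1$ with different splits; your middle-step ordering convention does not by itself forbid this. Because your junction point glues two non-adjacent points of $D$, it carries no intrinsic rank signature, so you are forced to \emph{search} for the split. The paper dissolves precisely this difficulty: since its $Q'$ is a contiguous subpath of $D'$ containing the apex, the split point is recoverable canonically as the unique maximum of the rank sequence of $Q'$, and the orientation ambiguity is resolved not by step types but by the rank condition $\max r(Q')+\min r(Q')>0$, which exactly one of $Q'$, $Q'^{rev}$ satisfies because $r(Q'^{rev})=-r(Q')$. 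A short displayed computation, expanding $r({Q'_1}^{rev}Q'_1)=(M-r(Q'_1))\cup(M+r(Q'_1))$, then shows this rank condition holds if and only if the completed path $D'$ is a self-complementary Dyck path with $\max r(D')=2\max r(Q')$; this single equivalence proves both well-definedness of the inverse and that it inverts $\Psi$. To complete your proof you must either actually establish your uniqueness-of-split lemma (including its compatibility with the ordering convention, a nontrivial case analysis), or, more economically, replace $w_1w_2$ by the contiguous middle half and adopt the paper's rank normalization, at which point the search for the split disappears.
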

\begin{proof}
We construct a bijection $\Psi$ from the set $A$ of self-rank complementary Dyck paths to the set $B$ of paths with $m'=\lfloor m/2\rfloor$ E steps
 and $n'=\lfloor n/2 \rfloor$ N steps. The theorem then follows since the latter set clearly has cardinality $\binom{m'+n'}{m',n'}$.

It is convenient to allow the use of half N and half E steps. We use the rank system of $\cal F_{m,n}$ through out the proof of the theorem. That is,
a point $(a,b)$ has rank $b m- a n$.


Given a Dyck path $D$ with $\bar D=D$, we split $D$ at its highest rank $2M$ into two palindromic paths.
Then we can write $D=Q_1^{rev} Q_1 Q_2^{rev} Q_2$ where each $Q_i$ may start with a half step. Let $Q=Q_1Q_2^{rev}$. If $Q$ starts with $E^{1/2}$ or end with $N^{1/2}$ (or both), then
set $\Psi(D)$ to be the path $Q$ with the half steps (if any) removed; otherwise, set $\Psi(D)$ to be the path $Q^{rev}$ with the half steps removed.

To see that $\Psi(D) \in B$, we observe that even $m$ and even $n$ can not be coprime, and that half N step appears if and only if $n$ is odd and half E step appears if and only if $m$ is odd. Thus the type of the half steps is determined by the parity of $m$ and $n$.

Conversely, given a path in $B$, adding $N^{1/2}$ at the end if $n$ is odd, and adding $E^{1/2}$ at the beginning if $m$ is odd,
reversing the resulting path if need to get a new path $Q'$ satisfying $\max r(Q')+\min r(Q')>0$. Note that this is always possible
since $r({Q'}^{rev})=-r(Q')$ ($Q'$ starts at $(0,0)$ of rank $0$ and ends at
$(m/2,n/2)$ of rank $0$). Split $Q'$ at the highest rank to get
$Q'=Q'_1 {Q'_2}^{rev}$. Finally set $D'={Q'_1}^{rev} Q'_1 {Q'_2}^{rev} Q'_2$ to be the inverse image.

\begin{figure}[ht]
\begin{center}
\begin{displaymath}
\vcenter{\hbox{\dessin{width=330pt}{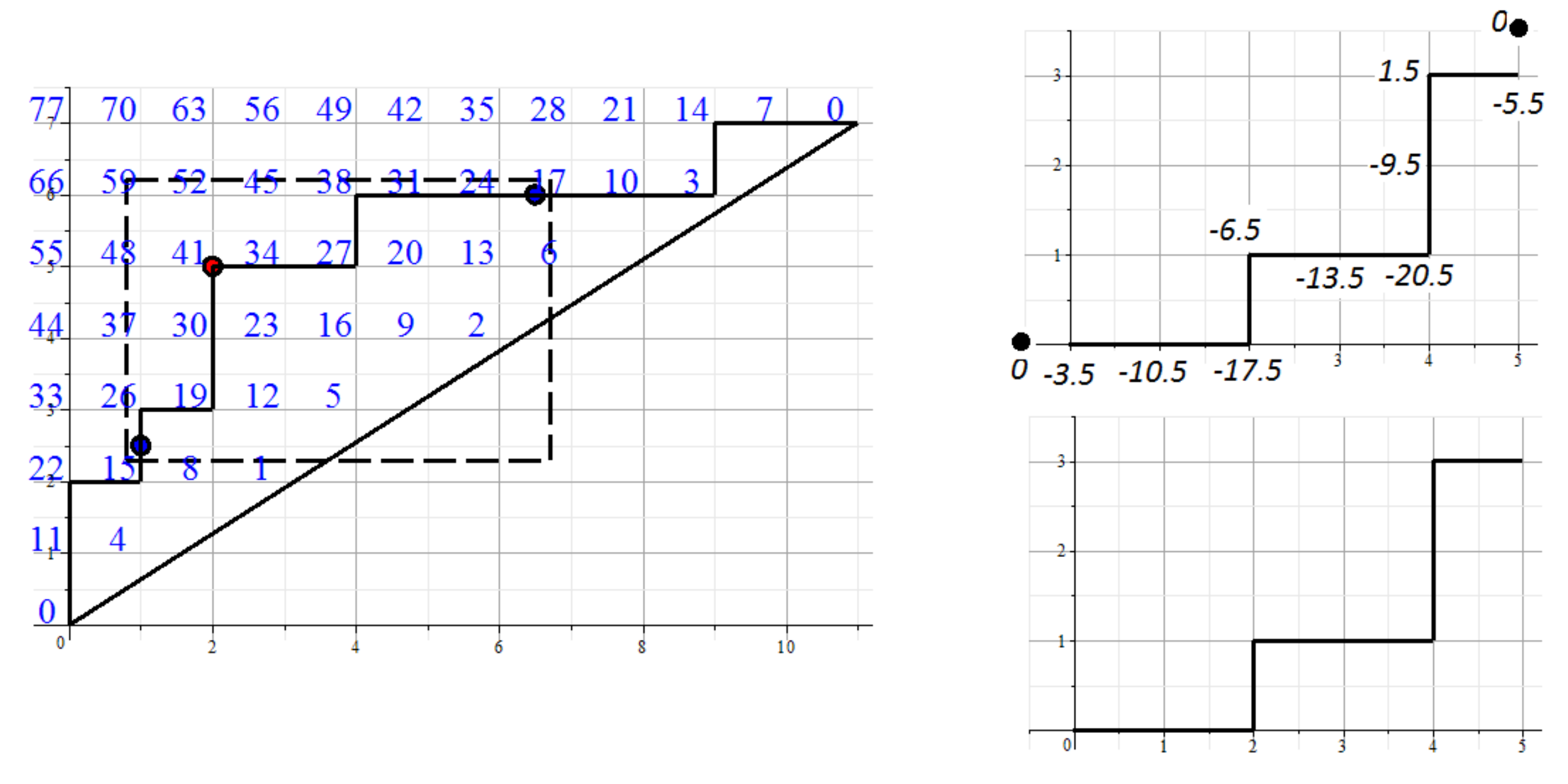}}}
\end{displaymath}
\caption{An example for the bijection $\Psi$, where $(m,n)=(11,7)$. The top right picture is the intermediate step.}
\label{fig:Psi bijection}
\end{center}
\end{figure}

To see that $D'\in A$, we only need to check that $\min r(D')=0$ and $Q'=Q$.
Indeed, if we set $M=\max r(Q')$, then
\begin{align*}
  r({Q'_1}^{rev} Q_1)&=  r({Q'_1}^{rev}) \cup ( M+r(Q_1'))= (M- r(Q'_1)) \cup ( M+r(Q_1')). %
\end{align*}
Thus ${Q_1'}^{rev} Q_1'$ is a path starting at lowest rank $0$ and ending at the highest rank $2M$ if and only if $Q_1'$ is a path starting at rank $0$ ending at highest rank $M$
and having all ranks larger than $-M$. Similarly, ${Q_2'}^{rev}Q_2'$ is a path starting at highest rank $2M$ and ending at lowest rank $0$ if and only if
$Q_2'^{rev}$ is a path starting at the highest rank $M$, ending at rank $0$ and having all ranks larger than $-M$. Therefore $Q'$ satisfies $\max r(Q')+\min r(Q')>0$
if and only if $D'\in A$ and $\max r(D') =2\max r(Q')$.
%
%
%
%
%
\end{proof}

\section{An alternative approach to $n$-core partitions \label{sec:n-core}}
A partition of $\n\in \N$ is a weakly decreasing sequence $\lambda=\lambda_1\ge \cdots\ge \lambda_a>0$ of positive integers such that
$\n=|\lambda|=\lambda_1+\cdots+\lambda_a$. The integer $\n$ is also called the size or area of $\lambda$ and denoted by $\lambda \vdash \n$.
The $i$-th part is $\lambda_i$. The length of $\lambda$ is $\ell(\lambda)=a$, the number of parts of $\lambda$.
The empty sequence is regarded as the unique partition of $0$ with length $0$.
A partition $\lambda$ is usually identified with its Ferrers-Young diagram, which consists of $\lambda_i$ left justified boxes (or nodes, cells) in the $i$-th row from the top. Then a cell $c=(i,j)$ belongs to $\lambda$ if and only if $i\le \ell(\lambda)$ and $j\le \lambda_i$. The hook number $h(c)$ of a cell $c$ is defined by $h(c)=1+\arm(c)+\leg(c)$, where $1$ corresponds to $c$ itself, the arm $\arm(c)$ is the number of cells to the right of $c$, and the leg $\leg(c)$ is the number of cells that are below $c$.
Flipping along the main diagonal gives the Ferrers-Young diagram of the conjugate of $\lambda$, denoted $\lambda^T=(\lambda_1',\dots,\lambda_b')_\ge$. Then $|\lambda^T|=|\lambda|$ and $\lambda_i'$ is just the number of boxes in the $i$-th column of the diagram of $\lambda$. Conjugation transposes $c=(i,j)$ in $\lambda$ to $c'=(j,i)$ in $\lambda^T$, with $\arm(c)=\leg(c')$, $\leg(c)=\arm(c')$ and hence $h(c)=h(c')$.
For example, $\lambda=(5,4,2,1)$ is a partition of $12$ with $4$ parts. Its conjugate $\lambda^T=(4,3,2,2,1)$ has $\lambda_1=5$ parts. See Figure \ref{fig:partition} for an  illustration of the above terminologies.
\begin{figure}[ht]
\begin{center}
\begin{displaymath}
\vcenter{\hbox{\dessin{width=330pt}{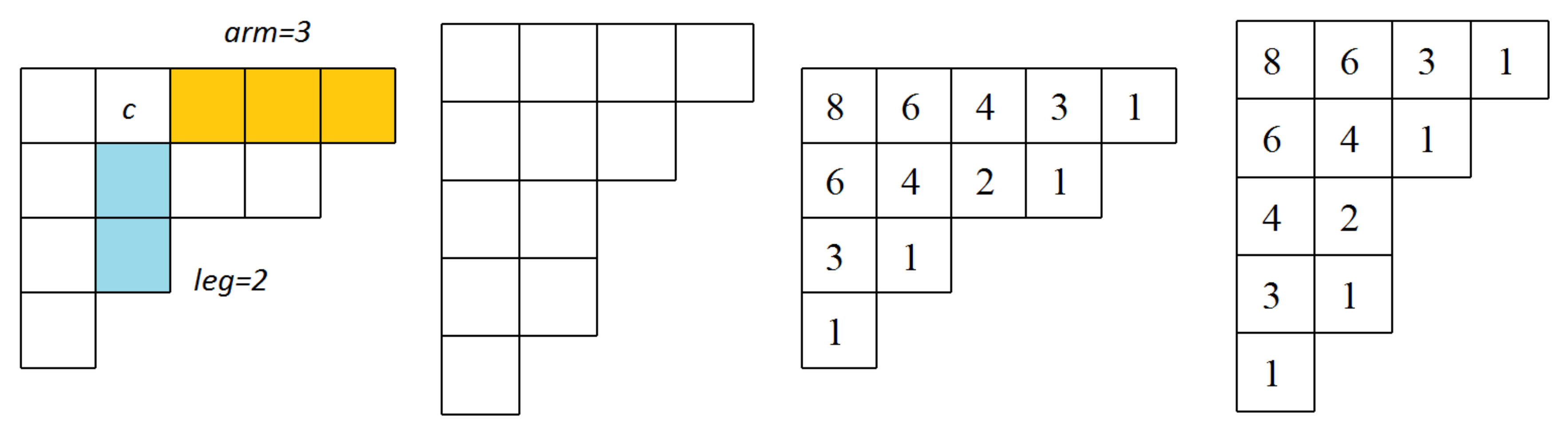}}}
\end{displaymath}
\caption{Partition $\lambda=(5,4,2,1)$, the conjugate $\lambda^T=(4,3,2,2,1)$, $\lambda$ with hook numbers put in the boxes, and similar for $\lambda^T$.}
\label{fig:partition}
\end{center}
\end{figure}

A partition $\lambda$ is said to be an $n$-core if all the hook lengths of $\lambda$ are not divisible by $n$ (or equivalently, equal to $n$). The $\lambda$ in Figure \ref{fig:partition} is a $5$-core.
Core partitions were introduced by Nakayama in the theory of symmetric group representations
\cite{Nakayama1,Nakayama2}.

Denote by $H_\lambda=\{\lambda_1+a-1, \lambda_2+a-2, \dots, \lambda_a\}_>$ the set (arranged decreasingly) of the first column hook lengths of $\lambda$. Then a set of positive integers $H$ uniquely determines
a partition $\lambda$ such that $H=H_\lambda$.

Given a positive integer $n$ and a set $H=\{h_1,\dots, h_a\}_>$ of positive integers,
define
$$s_i=s_i^n(H)=\max ( (n+H) \cup \{i\} \cap (n\mathbb{Z}+i)), \qquad \text{for } 0\le i\le n-1,$$
and let $S^n(H)=\{s_0,\dots, s_{n-1}\}$. (We will often omit the superscript $n$ when it is clear from the context.)
In words, if there exists an $h\in H$ with $h \equiv i \bmod{n}$ then $s_i-n$ is the maximum of such $h$, otherwise $s_i=i$.
Clearly, we have $\max S(H)=h_1+n$.
We say $H$ is $n$-flush at $i$ if $H \cap (n\Z+i)$ is
either empty (hence $s_i=i$) or $\{i,n+i,\dots,s_i-n\}$. The set $H$ is called $n$-flush if it is $n$-flush at $i=1,2,\dots,n-1$ and $s_0=0$.
In other words, every $h\in H$ is not divisible by $n$ and $h\in H$ implies that $h-n$ is either negative or belongs to $H$.
Thus $S(H)$ is a natural encoding of the $n$-flush set $H$. Later we will see that in a different context,
$S(H)$ corresponds to the ranks of the south ends of a Dyck path.

\begin{theo}\label{t-SH-complement}
Suppose the first column hook lengths $H_\lambda=\{h_1,\dots, h_a\}_>$ of a partition $\lambda$ is an $n$-flush set. Then $H_{\lambda^T}$ is also $n$-flush and
\begin{align}
  S(H_\lambda) = h_1+n -S(H_{\lambda^T}). \label{e-SHH'}
\end{align}
\end{theo}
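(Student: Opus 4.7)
The plan is to reduce the claim to a residue-class count on the interval $[0,h_1]$, anchored by the classical complementation identity between the first-column hook sets of $\lambda$ and $\lambda^T$.

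First I would note that $H_{\lambda^T}$ is $n$-flush. Indeed, the $n$-flush condition on $H_\lambda$ is the beta-set translation of ``$\lambda$ is an $n$-core'' (no hook length divisible by $n$, and if $h\in H_\lambda$ with $h>n$ then $h-n\in H_\lambda$). Since conjugation preserves every hook length (noted just above in the excerpt), $\lambda$ is an $n$-core iff $\lambda^T$ is, and the first assertion follows.

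Next I would invoke the (classical) beta-set complementation identity
\begin{align*}
H_\lambda \;\sqcup\; \bigl(h_1 - H_{\lambda^T}\bigr) \;=\; \{0,1,2,\dots,h_1\},
\end{align*}
in which $h_1 = \lambda_1 + a - 1 = h'_1$ is the $(1,1)$-hook, the same for $\lambda$ and $\lambda^T$. This can be verified either directly from $h_i = \lambda_i+a-i$ and $h'_j=\lambda^T_j+b-j$ via a short bijection with the boundary-path steps of $\lambda$, or cited as standard. The upshot is that $H_\lambda$ and $h_1-H_{\lambda^T}$ are complementary subsets of $\{0,1,\dots,h_1\}$.

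Now intersect this identity with each residue class $R_i := (n\Z+i)\cap[0,h_1]$ and unpack the $n$-flush data. The $n$-flush property of $H_\lambda$ makes $H_\lambda\cap R_i$ a prefix $\{i,n+i,\dots,s_i-n\}$ of $R_i$, so $k_i:=|H_\lambda\cap R_i|=(s_i-i)/n$ and hence $s_i=k_i n+i$; analogously $s_j(H_{\lambda^T})=k'_j n + j$. Setting $j':=(h_1-i)\bmod n$ so that $h_1-h'\equiv i\pmod n$ precisely when $h'\equiv j'\pmod n$, the identity restricted to $R_i$ becomes
\begin{align*}
(H_\lambda\cap R_i)\;\sqcup\;\bigl(h_1-(H_{\lambda^T}\cap R_{j'})\bigr)\;=\;R_i.
\end{align*}
Writing $h_1-i=qn+j'$ with $0\le j'<n$ gives $|R_i|=q+1=k_i+k'_{j'}$, hence
\begin{align*}
s_i+s_{j'}(H_{\lambda^T})\;=\;(k_i+k'_{j'})n+i+j'\;=\;(q+1)n+i+j'\;=\;h_1+n.
\end{align*}
Since $i\mapsto j'$ is a bijection on $\{0,1,\dots,n-1\}$, collecting these equalities yields $S(H_\lambda)=h_1+n-S(H_{\lambda^T})$, as required.

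The main obstacle is the second step: the beta-set complementation identity is the bridge that converts conjugation of partitions into a genuine additive complement on the level of $S(\cdot)$. Once this identity is in hand, the remainder is bookkeeping driven by the $n$-flush prefix structure. A priori one might worry about the residue-$0$ case, but it is handled uniformly by $s_i=k_i n+i$, with $s_0=0$ coming from $k_0=0$ (no element of $H_\lambda$ is divisible by $n$).
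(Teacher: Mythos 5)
Your proof is correct, but it takes a genuinely different route from the paper. The paper argues by induction on the number of rows: it removes the first row of $\lambda$ to obtain $\mu$, relates $S(H_\lambda)$ to $S(H_\mu)$ via \eqref{e-SHlambdamu}, and then proves the technical claim \eqref{e-SH'lambdamu}, namely $S(H'_\lambda)=(S(H'_\mu)+h_1-h_2)\setminus\{n\}\cup\{0\}$, by a residue-by-residue analysis of $H'=(\delta+G')\cup\{1,\dots,b-c\}$. You instead give a direct, non-inductive argument from the classical beta-set complementation identity $H_\lambda\sqcup(h_1-H_{\lambda^T})=\{0,1,\dots,h_1\}$: intersecting with residue classes, the prefix structure forced by $n$-flushness converts the set-theoretic complement into the additive complement $s_i+s_{j'}(H_{\lambda^T})=h_1+n$, and your bookkeeping is sound --- $s_i=k_in+i$ holds uniformly (including $i=0$), $|R_i|=q+1=k_i+k'_{j'}$ follows from disjointness, and $i\mapsto j'=(h_1-i)\bmod n$ is indeed a bijection on residues, so the collected equalities give \eqref{e-SHH'}. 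Your route is shorter and arguably more conceptual, at the cost of importing two standard external facts: the complementation identity and the equivalence between ``$H_\lambda$ is $n$-flush'' and ``$\lambda$ is an $n$-core.'' The latter deserves a caveat: it is precisely the Corollary that the paper derives \emph{from} Theorem \ref{t-SH-complement} as a new proof of a classical result, so while your argument is not circular as written (the equivalence has an independent abacus proof, cf.\ Anderson), substituting your proof into the paper would undercut that corollary. This dependency is easily removed: the complementation identity alone yields the $n$-flushness of $H_{\lambda^T}$, since the complement of the prefix $H_\lambda\cap R_i$ in $R_i$ is a suffix $\{s_i,s_i+n,\dots\}$, and reflection by $h_1$ carries it to a prefix of $R_{j'}$; the residue-$0$ case is safe because $h_1\in H_\lambda$ forces $H_\lambda\cap R_{h_1\bmod n}=R_{h_1\bmod n}$, so $H_{\lambda^T}$ contains no multiple of $n$. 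With that small patch your proof becomes fully self-contained and would even streamline the paper's logic.
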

\begin{proof}
We prove the the theorem by induction on the length $a$ of $\lambda$.
Equation \eqref{e-SHH'} clearly holds when $\lambda$ is empty. If $a=1$ then $h_1=\lambda_1<n$ for otherwise $h_1-n\in H_\lambda$, a contradiction.
Now $S(H_\lambda)=\{0,1,\dots,n-1\}\setminus \{h_1\} \cup \{h_1+n\}$ and
\begin{multline*}
  S(H'_\lambda)=S(\{h_1,h_1-1,\dots,1\})=\{0,n+1,\dots, n+h_1,h_1+1,\dots,n-1\}\\
  =\{h_1+1,\dots,h_1+n\}\setminus \{n\}\cup \{0\}
=h_1+n-(\{0,\dots,n-1\}\setminus \{h_1\}\cup \{h_1+n\}  ),
\end{multline*}
which is just $h_1+n-S(H_\lambda)$  as desired. It is also clear that $\{h_1\}$ is $n$-flush if and only if $h_1<n$,
   if and only if $\lambda=(h_1)_>$ is an $n$-core.

Now assume $a\ge 2$ and we proceed the induction on $a$ for the theorem. Denote by $H'=H'_\lambda=\{h_1',\dots, h'_b\}_< $ the first row hook lengths of $\lambda$. This is also the first column hook lengths of $\lambda^T$.
Then $b=\lambda_1$ is the length of $\lambda^T$. Clearly $h_1+n=h_1'+n$ is the maximum of both $S(H_\lambda)$ and $S(H'_\lambda)$.
Let $\mu$ be the partition obtained from $\lambda$ by removing the first row.
Then the hook lengths of the first column of $\mu$ is just $H_\mu=\{h_2,\dots,h_a\}_<$, which is also $n$-flush.
Since $H_\lambda$ is $n$-flush, $h_1-n$ is either negative or belongs to $H_\lambda$. It follows that $h_2\ge h_1-n$, or equivalently $h_1-h_2\le n$, and
that $S(H_\lambda)$ and $S(H_\mu)$ differ by only one element, i.e., $S(H_\lambda)$ has $h_1+n$ but $S(H_\mu)$ has $h_1$.
In formula we have
\begin{align}\label{e-SHlambdamu}
  S(H_\lambda)=S(H_\mu)\setminus \{h_1\}\cup \{h_1+n\}.
\end{align}

Let $G'=H'_{\mu}=\{g'_1,\dots, g_c'\}_>$ be the first row hook lengths of $\mu$. Then $c=\lambda_2$ and $g_1'=h_2$.
By the induction hypothesis, $G'$ is $n$-flush and Equation \eqref{e-SHH'} holds for $\mu$:
\begin{align}
  \label{e-SHH'mu}
  S(H_\mu) = h_2+n -S(H'_{\mu})=h_2+n-S(G').
\end{align}
In particular $h_1\in S(H_\mu)$ implies that $h_2+n-h_1\in S(G')$, that is, $s_{n-h_1+h_2}(G')=n-h_1+h_2$.

Claim: the set $H'_{\lambda}$ is $n$-flush, and the following equation holds.
\begin{align}\label{e-SH'lambdamu}
  S(H'_\lambda)=(S(H'_\mu)+h_1-h_2)  \setminus \{n\} \cup \{0\}.
\end{align}
Assuming the claim holds, we can complete the induction by the following computation.
\begin{align*}
  h_1+n-S(H_\lambda) &= h_1+n- (S(H_\mu) \setminus \{h_1\}\cup \{h_1+n\}) \qquad (\text{by \eqref{e-SHlambdamu}}) \\
                     &=(h_1+n-S(H_\mu)) \setminus \{n\} \cup \{0\} \\
(\text{by \eqref{e-SHH'mu}})    \qquad                 &=(h_1-h_2+S(H'_\mu) ) \setminus \{n\} \cup \{0\}  \\
        \text{(by \eqref{e-SH'lambdamu})}       \qquad      &=S(H'_\lambda).
\end{align*}

To show the claim, observe that $h_j'-g_j'=b-c+1$ for $j=1,2,\dots, c$ and
$h_{c+1}',\dots, h_b'$ are given by $h_1'-g_1'-1,\dots, 2,1$. In formula we have
$$  H'= (b-c+1 +G') \cup \{1,2,\dots,b-c\}= (\delta +G') \cup \{1,2,\dots,b-c\},$$
where we have set $\delta= b-c+1 =h_1-h_2$ for convenience.

Since $G'$ is $n$-flush, it is $n$-flush at $i$ for each $i$. Thus $G' \cap (i+n\Z)=\{i,n+i,\dots, s_i(G')-n\}$, where this set is taken to be empty if $s_i(G')<n$. We have
  \begin{align*}
    H'\cap (\delta +i+n\Z)&= ((\delta +G') \cup \{1,2,\dots,b-c\} )\cap (\delta+i+n\Z)\\
                           &= (\delta+ (G'\cap (i+n\Z))) \cup (\{1,2,\dots,b-c\} \cap (\delta+i+n\Z))\\
                           &=\{i+\delta,i+n+\delta,\dots, s_i(G')-n+\delta \} \cup A,
  \end{align*}
where $A$ is $\{\delta+i-n\}$ if $\delta+i>n$
and is empty otherwise. In the former case, since $h_1-h_2 \le n$, we have $0<\delta+i-n<n$ so
that $H'$ is flush at $\delta+i-n$ with $\delta+s_i(G')\in S(H')$;
In the latter case, if $\delta+i<n$ then $H'$ is flush at $\delta+i$ with $\delta+s_i(G')\in S(H')$;
in the only remaining case $\delta+i=n$, by the assumption $s_{n-\delta }(G')=n-\delta <n$, we have $H'\cap (n+n\Z)=\emptyset$ with
$\delta+s_{n-h_1+h_2}(G')=n \not\in S(H')$ but $s_0(H')=0$ instead. This completes the proof since when $i$ ranges over $0,1,\dots,n-1$,
so does $h_1'-g_1'+i \bmod n$.
\end{proof}

Theorem \ref{t-SH-complement} establishes a simple connection between $H_\lambda$ and $H_{\lambda^T}$ for $n$-core $\lambda$. It is an important tool in
our study of $(m,n)$-cores. One consequence of Theorem \ref{t-SH-complement} is the following classical result. This result has an elegant proof
 using the abacus notation of partitions. See, e.g., \cite{Anderson}. As a warm up and also for self-containedness, we include a new proof of this classical result.

\begin{cor}
A partition $\lambda$ is an $n$-core if and only if  $H_\lambda$ is $n$-flush.
\end{cor}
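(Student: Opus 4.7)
The plan is to reduce both implications to the classical beta-number description of hook lengths. Writing $H_\lambda=\{h_1>\cdots>h_a\}$, I would first establish the multiset identity
\begin{align*}
\{h(c):c\in\lambda\} \;=\; \{h_i-g : 1\le i\le a,\ 0\le g<h_i,\ g\notin H_\lambda\},
\end{align*}
where the pair $(i,g)$ on the right corresponds bijectively to the unique cell $(i,j)$ in row $i$ of $\lambda$ with hook length $h_i-g$. Proving this amounts to inspecting a single row: for fixed $i$, as $j$ runs from $1$ to $\lambda_i$, the hook length $h(i,j)=\lambda_i-j+\lambda'_j-i+1$ decreases from $h_i$ to $1$, skipping exactly the values that lie in $H_\lambda$; a size check $\sum_i(h_i-(a-i))=\sum_i\lambda_i=|\lambda|$ then confirms the bijection. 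This is essentially the content of the abacus picture the author alludes to, stated without the abacus language.

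Granted this identity, the forward direction is a one-step contradiction. Assume $\lambda$ is an $n$-core. If some $h_i\in H_\lambda$ were divisible by $n$, then (since $0\notin H_\lambda$) the pair $(i,0)$ produces the hook length $h_i$ divisible by $n$, a contradiction. If $h_i>n$ but $h_i-n\notin H_\lambda$, then the pair $(i,h_i-n)$ produces a hook length equal to $n$, again a contradiction. Hence $H_\lambda$ satisfies both defining properties of $n$-flushness.

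For the reverse direction, assume $H_\lambda$ is $n$-flush and, for contradiction, that some hook length of $\lambda$ equals $kn$ with $k\ge 1$. By the identity, $kn=h_i-g$ for some $h_i\in H_\lambda$ and some $g\in\{0,\dots,h_i-1\}\setminus H_\lambda$, so $g=h_i-kn$. Flushness forbids $n\mid h_i$, which rules out $g=0$; and iterating the flush property ``$h\in H_\lambda,\ h>n\Rightarrow h-n\in H_\lambda$'' shows that $h_i-n,h_i-2n,\dots,h_i-kn$ all lie in $H_\lambda$, contradicting $g\notin H_\lambda$. So no hook length of $\lambda$ is divisible by $n$.

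The main obstacle is the first step: proving the beta-number description of hook lengths. The verification is a bookkeeping exercise on the Ferrers diagram profile, but it is the only nontrivial ingredient; once it is in hand, the corollary follows mechanically from the definition of $n$-flushness, so the writeup should be short.
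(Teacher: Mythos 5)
Your proposal is correct, but it takes a genuinely different route from the paper's. You reduce everything to the classical beta-set description of hook lengths --- in row $i$ the hooks are $\{h_i-g : 0\le g<h_i,\ g\notin H_\lambda\}=\{1,\dots,h_i\}\setminus\{h_i-h_j : j>i\}$ --- which is essentially the abacus argument that the author explicitly cites (to Anderson) and deliberately sets aside in favor of ``a new proof.'' The paper instead derives both directions from its Theorem \ref{t-SH-complement}: $n$-flushness of $H_\lambda$ is inherited both by $H_\mu$ (first row deleted) and by $H_{\lambda^T}$, so by induction the hook lengths of every row form an $n$-flush set and hence contain no multiple of $n$; the converse is a minimal-counterexample argument that uses the relation $S(H_\mu)=h_2+n-S(H'_\mu)$ to manufacture a hook length $jn-n$ in the first row. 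Your route buys elementarity and brevity: once the multiset identity is proved, both implications are mechanical (your iteration showing $h_i-n,\,h_i-2n,\dots,h_i-kn\in H_\lambda$ is exactly right, and it yields ``no hook divisible by $n$'' directly rather than via an $n$-hook), and it is independent of Theorem \ref{t-SH-complement}. The paper's route buys coherence with its machinery: it stays inside the $S(H)$/rank encoding that is reused immediately afterward for Anderson's bijection and the skew-length computation, and it showcases Theorem \ref{t-SH-complement} as the engine of the whole paper. Two small slips in your sketch of the key identity, neither fatal: the hook lengths in row $i$ skip the values $h_i-h$ for $h\in H_\lambda\cap\{1,\dots,h_i-1\}$ (equivalently, $g=h_i-h(i,j)$ skips exactly the elements of $H_\lambda$), not the values lying in $H_\lambda$ themselves; and the last hook in row $i$ is $\lambda'_{\lambda_i}-i+1$, which equals $1$ only when $\lambda_{i+1}<\lambda_i$, so ``decreases from $h_i$ to $1$'' should be replaced by the statement that $g$ ranges over $\{0,\dots,h_i-1\}\setminus H_\lambda$; your counting check $h_i-(a-i)=\lambda_i$ then closes the argument as you say.
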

\begin{proof}

The necessity follows by the fact that the $n$-flushness of $H_\lambda$ implies the $n$-flushness of $H_\mu$ and $H_{\lambda^T}$.
Thus inductively the hook lengths of each row of $\lambda$ is $n$-flush, and hence contains no multiple of $n$.

We prove the sufficiency part of the theorem by contradiction.
We follow notations in the proof of Theorem \ref{t-SH-complement}. Assume $a$ is minimal such that $\lambda$ is an $n$-core of length $a$ but $H_\lambda$ is not $n$-flush. Clearly we may assume $a\ge 2$. Use the notation for $\lambda$ and $\mu$ as before. By the minimality of $a$, $H_\mu$ is $n$-flush. If $b-c=h_1-h_2-1\ge n$ then $H_\lambda'$ has $n$ as a hook length of $\lambda$. A contradiction. So assume $h_1-h_2\le n$. By definition $h_1$ is not a multiple of $n$.
Let $j$ be the smallest number such that $h_1-jn$ is negative or belongs to $H_\mu$. Then $j\ge 2$ since $H_\lambda$ is not $n$-flush but $\mu$ is $n$-flush.

By Theorem \ref{t-SH-complement},
\eqref{e-SHH'mu} holds. It follows that
$h_2+n-(h_1-jn+n)=h_2-h_1+jn \in S(H_\mu')$, so that $jn+h_2-h_1-n>0$ appears as a hook length $g_k'$ in $H_\mu'$  for some $k\le c$. Then $g_k'+h_1-h_2=jn+h_2-h_1-n+h_1-h_2=jn-n$ appears as the hook length $h_k'$, contradicting the assumption that $\lambda$ is an $n$-core partition.
\end{proof}


\section{Conjugation of $(m,n)$-cores and rank complement of $\cal D_{m,n}$ \label{sec:conjugation-rank-complement} }

A partition $\lambda$ is said to be an $(m,n)$-core if it is both an $m$-core and $n$-core. By Theorem \ref{t-SH-complement}, $\lambda$ is $(m,n)$-core if and only if $H_\lambda$ is $(m,n)$-flush, i.e., $H_\lambda$ is both $m$-flush and $n$-flush.
When $m$ and $n$ are relatively prime, $h\in H_\lambda$ can be uniquely written as $h=am-bn$ for $0\le b<m$. We claim that $a<n$, so that $h$ is the rank of a lattice point in the $m$ by $n$ rectangle. If $a\ge n$ then we can write $h=(a-n)m+(m-b)n$. Then the $n$-flushness of $H_\lambda$ implies that $a\ne n$ and that $h-(m-b)n=(a-n)m$ is also in $H_\lambda$, which contradicts the $m$-flushness of $H_\lambda$.

Now for a coprime pair $(m,n)$, we identify $H_\lambda$ with the ranks of lattice points $V$ inside the $m$ by $n$ rectangle. Then the $(m,n)$-flush property says that $h\in H_\lambda$ implies that $h-m$ and $h-n$, if positive, are also in $H_\lambda$. This corresponds to that if $(a,b)\in V$ then $(a+1,b)$ and $(a,b-1)$ are also in $V$, provided that they are above the diagonal. Such $V$ are exactly the lattice points to the right of an $(m,n)$-Dyck path $D$. This gives Anderson's bijection that takes $D$ to $\alpha(D)=\lambda$ by setting $H_\lambda$ to be the
the positive ranks of the lattice points to the right of the $(m,n)$-Dyck path $D$. The set $S(H_\lambda)$ is exactly the ranks of the south ends of
$D$. See Figure \ref{fig:anderson bijection} for examples.
\begin{figure}[ht]
\begin{center}
\begin{displaymath}
\vcenter{\hbox{\dessin{width=430pt}{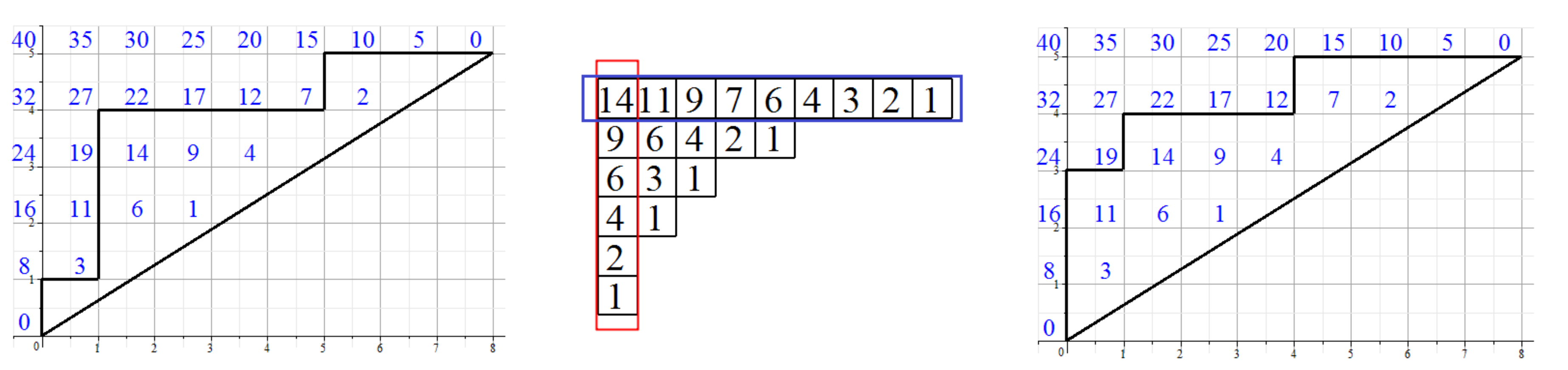}}}
\end{displaymath}
\caption{Anderson's bijection for $(m,n)=(8,5)$: Dyck path $D$, $\lambda=\alpha(D)$, 
and the rank complement $\bar D$. The first column hook lengths of $\lambda$ are the ranks to the right of $D$, and the first row hook lengths of $\lambda$ are the ranks to the right of $\bar D$.}
\label{fig:anderson bijection}
\end{center}
\end{figure}


If a partition $\lambda$ is an $(m,n)$ core, then clearly so is its
conjugate $\lambda^T$. A natural question is what is the relation between the paths $\alpha^{-1}(\lambda)$ and $\alpha^{-1} (\lambda^T)$. See Figure \ref{fig:anderson bijection} for an example. At the first glance, the two paths seems very different. But we have the following result.
\begin{theo}\label{t-mncore-conjugate}
Suppose $m$ and $n$ are coprime and $D$ is an $(m,n)$-Dyck path. Then the $(m,n)$-core $\alpha(\bar D)$ is just the conjugate of the $(m,n)$-core $\alpha (D)$.
\end{theo}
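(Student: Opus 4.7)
The plan is to reduce the theorem to two ingredients that are already in hand: Lemma \ref{l-southendsComplement}, which describes how the rank complement acts on south ends, and Theorem \ref{t-SH-complement}, which describes how conjugation of an $n$-core acts on the encoding $S^n(H_\lambda)$. Both statements have the same shape -- ``reflect around the maximum'' -- and Anderson's bijection supplies the dictionary that identifies them.

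More precisely, set $\lambda=\alpha(D)$, so that $H_\lambda$ is the set of positive ranks of lattice points to the right of $D$. By the description of Anderson's bijection recalled just before the theorem, the set $S(D)$ of ranks of south ends of $D$ coincides with $S^n(H_\lambda)$. Applying Lemma \ref{l-southendsComplement} gives
\begin{equation*}
S(\bar D)=\max S(D)-S(D)=\max S^n(H_\lambda)-S^n(H_\lambda).
\end{equation*}
On the other hand, since $\lambda$ is an $(m,n)$-core, $H_\lambda$ is $n$-flush, so Theorem \ref{t-SH-complement} yields
\begin{equation*}
S^n(H_{\lambda^T})=\max S^n(H_\lambda)-S^n(H_\lambda).
\end{equation*}
Combining these two identities gives $S(\bar D)=S^n(H_{\lambda^T})$.

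To conclude, I would apply Anderson's bijection a second time, now to the Dyck path $\bar D$: its set of south end ranks equals $S^n(H_{\alpha(\bar D)})$, so
\begin{equation*}
S^n(H_{\alpha(\bar D)})=S(\bar D)=S^n(H_{\lambda^T}).
\end{equation*}
Because $S^n$ is a faithful encoding of $n$-flush sets (each $h\in H$ lies in the interval $\{i,n+i,\dots,s_i-n\}$ determined by $S^n(H)$), this forces $H_{\alpha(\bar D)}=H_{\lambda^T}$, and hence $\alpha(\bar D)=\lambda^T$, which is what was claimed.

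There is no real obstacle: all the heavy lifting has been done in Section \ref{sec:Rank Complement} (the geometric description of rank complement and its effect on south ends) and Section \ref{sec:n-core} (the reflection identity for $S^n$ under conjugation), and the proof is essentially a matter of lining up the two formulas via Anderson's map. The only point that requires a moment of care is checking that the ``max'' on the two sides really agrees, which is immediate once one notes that $\max S^n(H_\lambda)=h_1+n$ is the same quantity that appears in Theorem \ref{t-SH-complement}, and that on the path side $\max S(D)$ is the rank of the highest south end of $D$; these two maxima coincide by construction of Anderson's bijection.
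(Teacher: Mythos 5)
Your proof is correct and is essentially identical to the paper's own argument: both combine Lemma \ref{l-southendsComplement} ($S(\bar D)=\max S(D)-S(D)$) with Theorem \ref{t-SH-complement} ($S^n(H_{\lambda^T})=h_1+n-S^n(H_\lambda)$), identify $S(D)$ with $S^n(H_{\alpha(D)})$ via Anderson's bijection, and conclude from the fact that $S^n$ faithfully encodes $n$-flush sets. Your closing remark on matching the two maxima is exactly the observation $\max S(D)=\max S(\bar D)=h_1+n$ that opens the paper's proof, so no further comment is needed.
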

\begin{proof}
Denote by $\lambda=\alpha(D)$ and $\lambda'=\alpha(\bar D)$. Then $\max S(D)=\max S(\bar D)=h_1(\lambda)+n=h_1(\lambda')+n$. By Lemma \ref{l-southendsComplement},
$$S(H_{\lambda'})=S(\bar D)= h_1+n-S(D)=h_1+n-S(H_{\lambda}), $$
where $H_{\lambda}$ and $H_{\lambda'}$ are regarded as $n$-flush set.

By Theorem \ref{t-SH-complement}, we also have
$S(H_{\lambda^T})=h_1+n-S(H_\lambda)$. It follows that $S(H_{\lambda^T})=S(H_{\lambda'})$, and hence $H_{\lambda^T}=H_{\lambda'}$ and $\lambda^T=\lambda'$.
This completes the proof.
\end{proof}

Combining Theorems \ref{t-mncore-conjugate} and \ref{t-self-rank-complement}, we recover Theorem \ref{c-num-self-conjugate-mncore}.

%
%
%

\section{The dinv statistic on $\cal D_{m,n}$ and the skew-length for $(m,n)$-cores \label{sec:dinv-skew-length}}
\subsection{The area and dinv statistics for rational Dyck paths}
We introduce two important statistics on an $(m,n)$-Dyck path $D\in \cal D_{m,n}$ for general pair $(m,n)$ of positive integers.
The area $\area(D)$ of $D$ is the number of lattice boxes between $D$ and the diagonal line $y=\frac{n}{m}x$.
The dinv statistic arose in the theory of parking functions. When restricted to
Dyck paths, $\dinv(D)$ of $D$ specializes to the following geometric construction: It counts the number of
cells $c$ of the partition above the path whose arm and leg satisfy the inequalities
$$ \frac{\arm(c)}{\leg(c) + 1} \le \frac{m}{n} < \frac{\arm(c) + 1}{\leg(c)}. $$
In Figure \ref{fig:DinvT} we have placed a green square in each of the cells that contribute to the $\dinv(D)$ (in the left picture) and
$\dinv(D^T)$ (in the right picture).
\begin{figure}[ht]
\begin{center}
\begin{displaymath}
\vcenter{\hbox{\dessin{width=330pt}{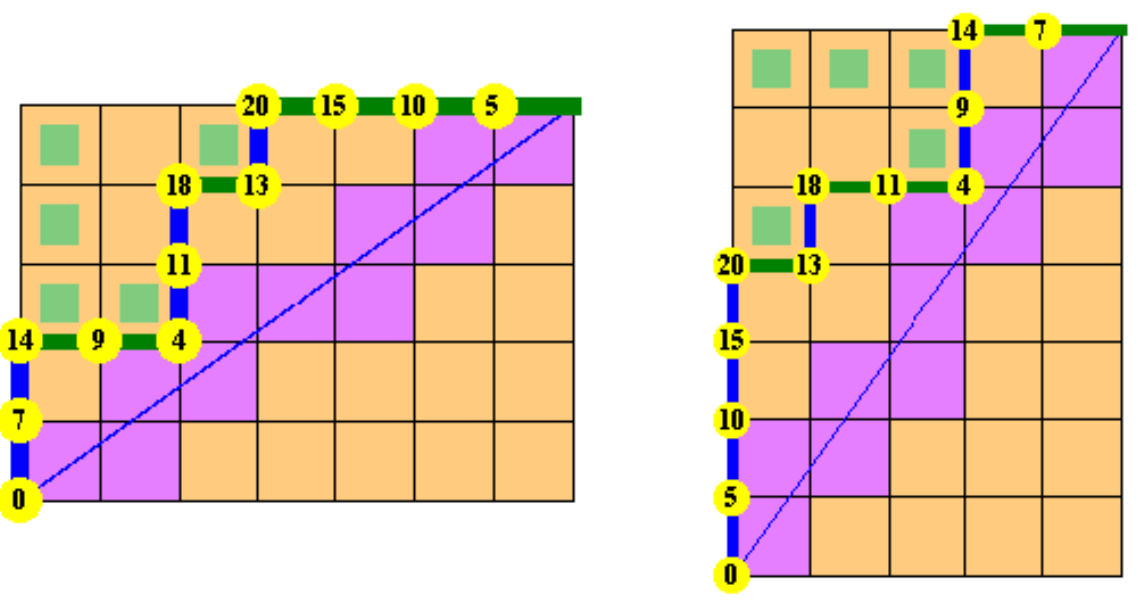}}}
\end{displaymath}
\caption{The dinv statistic and transpose of Dyck paths}
\label{fig:DinvT}
\end{center}
\end{figure}

An immediate consequence of the geometric construction is the following.
\begin{prop}
For every rational Dyck path $D\in \cal D_{m,n}$, we have
$$\dinv D= \dinv D^T. $$
\end{prop}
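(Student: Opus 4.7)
\emph{Proof plan.} The plan is to construct an explicit bijection, via partition conjugation, between the cells that contribute to $\dinv(D)$ and the cells that contribute to $\dinv(D^T)$. Let $\mu(D)$ denote the ``partition above the path'': the Young diagram occupying the northwest region bounded by $D$ inside the $m\times n$ rectangle, whose cells the dinv statistic sums over.

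First I would verify the geometric fact $\mu(D^T)=\mu(D)^T$, i.e., that taking the transpose conjugates this partition. Since $D^T=D^{\mathrm{rev}}|_{N\leftrightarrow E}$ amounts, up to relabeling of axes, to reflecting $D$ across a line that interchanges horizontal and vertical directions, the northwest region of $D^T$ is obtained from that of $D$ by swapping rows and columns. Combined with the standard fact that conjugation sends each cell $c=(i,j)\in\mu$ to $c^T=(j,i)\in\mu^T$ with arm and leg interchanged,
\[
\arm_{\mu^T}(c^T)=\leg_\mu(c),\qquad \leg_{\mu^T}(c^T)=\arm_\mu(c),
\]
this produces a canonical bijection $c\leftrightarrow c^T$ between cells of $\mu(D)$ and cells of $\mu(D^T)$.

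Next I would check that this bijection preserves the dinv condition. Inverting each side of
\[
\frac{\arm(c)}{\leg(c)+1}\le\frac{m}{n}<\frac{\arm(c)+1}{\leg(c)}
\]
flips the inequalities, sends $m/n$ to $n/m$, and after the arm--leg swap yields
\[
\frac{\arm(c^T)}{\leg(c^T)+1}\le\frac{n}{m}<\frac{\arm(c^T)+1}{\leg(c^T)},
\]
which is exactly the dinv condition for $D^T$ at $c^T$. The strict/non-strict mismatch introduced by reciprocation is harmless: an equality $n\,\arm(c)=m(\leg(c)+1)$ or $n(\arm(c)+1)=m\,\leg(c)$ would, by $\gcd(m,n)=1$, force $m\mid\arm(c)$ or $m\mid\arm(c)+1$, but $0\le\arm(c)\le m-1$ (since $\mu(D)$ sits in an $m\times n$ rectangle) rules each out.

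Hence $c\mapsto c^T$ is the required bijection and $\dinv(D)=\dinv(D^T)$. The main obstacle is the first step---pinning down the geometric identification $\mu(D^T)=\mu(D)^T$---while the reciprocation and arm/leg bookkeeping are routine.
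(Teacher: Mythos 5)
Your proposal is correct and follows essentially the same route as the paper, which asserts the proposition as an immediate consequence of the geometric construction; you have simply made explicit the conjugation bijection $c\mapsto c^T$ between the partitions above $D$ and $D^T$, the arm/leg swap, and the reciprocation of the defining inequalities. One small repair to your boundary-case analysis: the bound $0\le\arm(c)\le m-1$ alone excludes neither $\arm(c)=0$ (which does satisfy $m\mid\arm(c)$) nor $\arm(c)+1=m$, so finish instead by noting that the equality $n\,\arm(c)=m\,(\leg(c)+1)$ forces $\arm(c)>0$, hence $\arm(c)\ge m$, which is impossible, while the equality $m\,\leg(c)=n\,(\arm(c)+1)$ with $\arm(c)+1=m$ would force $\leg(c)=n$, contradicting $\leg(c)\le n-1$ for a cell inside the $m\times n$ rectangle.
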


The dinv statistic also appeared as the $h^+$ statistic. See, e.g., \cite{Amstrong-Catalan}. Alternative descriptions of
dinv have been found useful. The one in \cite[Lemma~11]{Amstrong-Catalan} can be stated as follows:
If $D$ is encoded by a sequence $(P_0,\dots, P_{m+n})$ of lattice points, then we have
\begin{align}
  \dinv(D) = \# \{i<j:  P_i\in W(D), P_j \in S(D), \text{ and } 0<r(P_i)-r(P_j) \le m+n\}.
\end{align}

Now we concentrate on the case when $(m,n)$ is a coprime pair. Our language translate the dinv statistic as follows.
\begin{align}
  \dinv(D) = \# \{ 0<r_w-r_s \le m+n : r_w\in W(D), r_s \in S(D), 
 r_w \text{ proceeds 
} r_s 
 \},
\end{align}
where ``proceeds" refers to the corresponding lattice points of $r_w$ and $r_s$ in $D$.


For a coprime pair $(m,n)$, consider the $m\times n$ rectangle. The number of lattice boxes to the left of the diagonal is $(m-1)(n-1)/2$, as the diagonal
hits a ribbon of $m+n-1$ boxes (the purple boxes in Figure \ref{fig:DinvT}), and the ribbon divides the rest of the rectangle into two equal pieces.
Thus
the maximum value of $\area(D)$ over $D\in \cal D_{m,n}$ is $\frac{(m-1)(n-1)}{2} $. This is also the maximum of $\dinv(D)$ by the geometric construction.
It is natural to use the notations
$$\coarea(D)=\frac{(m-1)(n-1)}{2}-\area(D), \qquad \codinv(D)=\frac{(m-1)(n-1)}{2}-\dinv(D).$$


\begin{defn}[The Sweep Map]
Suppose a Dyck path $D \in \cal D_{m,n}$ is encodes by its NE-sequence $D=p_1\cdots p_{m+n}$. Then the sweep map $\Phi(P)$ is obtained by sorting the steps $p_i$ into increasing order according to the ranks of their starting points.
\end{defn}
For example, take $(m,n)=(7,5)$, and $P=NNEENNENEEEE$. Then the path and the ranks are computed as
$$ \left[ \begin {array}{cccccccccccc} N&N&E&E&N&N&E&N&E&E&E&E\\
 0&7&14&9&4&11&18&13&20&15&10&5
\end {array} \right]. $$
Sorting the path according to the ranks gives
$$ \left[ \begin {array}{cccccccccccc} N&N&E&N&E&E&N&N&E&E&E&E
\\ 0&4&5&7&9&10&11&13&14&15&18&20\end {array}
 \right],
$$
and thus $\Phi(P)=N N E N E E N N E E E E$.

It is true but not obvious that $\Phi(D)$ is still in $\cal D_{m,n}$. See \cite{sweepmap} for a proof. The dinv statistic is closely related to the $\Phi$ map by the following result.
\begin{theo}\label{t-Phi-dinv}
Let $(m,n)$ be a coprime pair. Then for an $(m,n)$-Dyck path $D$, we have
$\dinv(D)=\area( \Phi(D))$, and consequently,
$$  \codinv(D)= \coarea ( \Phi(D))= \# \{(r_s,r_w) :   r_s\in S(D), r_w \in W(D), r_s<r_w \}.   $$
\end{theo}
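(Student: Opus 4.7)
My plan is to prove $\dinv(D) = \area(\Phi(D))$ by expressing $\area(\Phi(D))$ via the rank data of $D$; the codinv identity then follows by taking complements in $\frac{(m-1)(n-1)}{2}$. First I would establish a general area formula: for any $D' \in \cal D_{m,n}$ with south-end ranks $0 = s_1 < s_2 < \cdots < s_n$,
$$\area(D') = \sum_{k=1}^{n} \lfloor s_k/n \rfloor.$$
Indeed, the south end in row $b$ lies at column $a_b$ with rank $s_b = mb - na_b$, so $\lfloor s_b/n \rfloor = \lfloor mb/n \rfloor - a_b$ for coprime $(m,n)$. In row $b$ the boxes strictly above the diagonal occupy columns $0, 1, \ldots, \lfloor mb/n \rfloor - 1$; by the Dyck property $a_b \le \lfloor mb/n \rfloor$, so those in columns $a_b, \ldots, \lfloor mb/n \rfloor - 1$ are simultaneously above the diagonal and below the path, contributing $\lfloor s_b/n \rfloor$ boxes to the row's area.

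Next I would apply this to $\Phi(D)$. By the sweep construction, the $k$-th N step of $\Phi(D)$ corresponds to $s_k$, the $k$-th smallest element of $S(D)$, and is preceded in $\Phi(D)$ by exactly $k-1$ other N steps and $W_{<s_k}$ E steps, where $W_{<s_k} := |\{w \in W(D) : w < s_k\}|$. Hence the corresponding south end of $\Phi(D)$ has rank $m(k-1) - n W_{<s_k}$. Invoking the classical identity $\sum_{k=0}^{n-1} \lfloor km/n \rfloor = (m-1)(n-1)/2$ for coprime $(m,n)$:
$$\area(\Phi(D)) = \sum_{k=1}^{n}\bigl(\lfloor m(k-1)/n\rfloor - W_{<s_k}\bigr) = \frac{(m-1)(n-1)}{2} - \#\{(s, w) \in S(D) \times W(D) : w < s\}.$$

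Finally, I would verify $\dinv(D) = \frac{(m-1)(n-1)}{2} - \#\{(s, w) \in S(D) \times W(D) : w < s\}$, completing the proof of $\dinv(D) = \area(\Phi(D))$ and, by subtraction, of the codinv formula $\codinv(D) = \coarea(\Phi(D)) = \#\{(r_s, r_w) \in S(D) \times W(D) : r_w < r_s\}$. Starting from the pair formula $\dinv(D) = \#\{i<j : P_i \in W(D), P_j \in S(D),\ 0 < r(P_i) - r(P_j) \le m + n\}$, I would partition the $mn$ pairs $(s, w) \in S(D) \times W(D)$ according to the sign of $w - s$, the path precedence, and whether $|w - s| \le m + n$, then match the two relevant cases against the target count.

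The main obstacle is this final matching step: connecting the precedence-dependent dinv count to the purely rank-based pair count. A natural approach is to construct an explicit bijection between the pairs \emph{not} counted by either $\dinv(D)$ or $\#\{w < s\}$ and the lattice points on or below the diagonal in the $m \times n$ rectangle, which number $mn - (m-1)(n-1)/2$; this would simultaneously confirm the identity and provide a clean combinatorial interpretation for all four cases of the partition.
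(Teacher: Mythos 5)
Your steps 1 and 2 are correct and give a clean, self-contained proof of the \emph{second} equality in the theorem: the formula $\area(D')=\sum_{k}\lfloor s_k/n\rfloor$ is valid for coprime $(m,n)$; the rank of the $k$-th south end of $\Phi(D)$ is indeed $m(k-1)-nW_{<s_k}$; and with $\sum_{k=0}^{n-1}\lfloor km/n\rfloor=\frac{(m-1)(n-1)}{2}$ you correctly obtain $\coarea(\Phi(D))=\#\{(r_s,r_w)\in S(D)\times W(D): r_w<r_s\}$ --- granting, as you implicitly do, the nontrivial cited fact that $\Phi(D)\in \mathcal{D}_{m,n}$, which your area formula requires. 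Two remarks. First, your inequality $r_w<r_s$ is the \emph{correct} one; the inequality as printed in the theorem ($r_s<r_w$) is reversed, as one checks on the paper's own $(7,5)$ example $D=NNEENNENEEEE$: there $S(D)=\{0,4,7,11,13\}$, $W(D)=\{5,9,10,14,15,18,20\}$, $\area(\Phi(D))=5$, so $\coarea(\Phi(D))=7=\#\{r_w<r_s\}$, whereas $\#\{r_s<r_w\}=28$. Second, the paper gives no proof of this theorem at all: it cites Loehr--Warrington for $\dinv(D)=\area(\Phi(D))$ and Gorsky--Mazin for a bijective proof of the codinv statement, so there is no internal argument for you to have matched; what you completed is genuinely your own and covers part of what the paper imports.

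The genuine gap sits exactly on the theorem's hard content: your step 3, the identity $\dinv(D)=\frac{(m-1)(n-1)}{2}-\#\{(r_s,r_w): r_w<r_s\}$, equivalently $\dinv(D)=\area(\Phi(D))$, is only a plan, and it is precisely the Loehr--Warrington result. The obstacle you name is real --- the dinv pair count depends on path precedence and on the window condition $0<r_w-r_s\le m+n$, while the target count is purely rank-based --- and no matching is exhibited. Moreover the scaffolding of your proposed bijection is already off: the lattice points on or below the diagonal of the $m\times n$ rectangle number $\frac{(m-1)(n-1)}{2}+m+n+1=mn-\frac{(m-1)(n-1)}{2}+2$, not $mn-\frac{(m-1)(n-1)}{2}$; e.g.\ for $(m,n)=(7,5)$ there are $25$ such points while the pairs counted by neither statistic number $35-5-7=23$, so at minimum the two rank-zero corners must be discarded, and even then the actual bijection remains to be constructed. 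As it stands, your proposal proves $\coarea(\Phi(D))=\#\{(r_s,r_w):r_w<r_s\}$ but not $\dinv(D)=\area(\Phi(D))$; to close it you could either carry out the precedence-versus-rank matching (Mazin's bijective proof, cited in the paper, does essentially this) or cite Loehr--Warrington for the first equality as the paper does and keep your steps 1--2 as the proof of the remaining one.
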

Theorem \ref{t-Phi-dinv} holds for general $m$ and $n$ with certain modifications. This is one of the results in \cite[Theorem 16]{Loehr-Warrington}.
A bijective proof of $\codinv(D)= \coarea ( \Phi(D)) $ can be found in \cite{Gorsky-Mazin}.

\begin{cor}
 Let $(m,n)$ be a coprime pair. Then the rank complement transformation preserves the dinv statistic. In other words,
for any $(m,n)$-Dyck path $D$, we have
$$\dinv (\bar D)= \dinv(D).$$
\end{cor}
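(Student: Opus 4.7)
The plan is to read $\codinv$ off the south/west end data via Theorem \ref{t-Phi-dinv}, apply that formula separately to $\bar D$ and to $D^T$, and observe that both counts reduce to the same expression involving $S(D)$ and $W(D)$.

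First, by Lemma \ref{l-southendsComplement} and its east-end analogue combined with $W=E+n$,
$$S(\bar D) = M_s - S(D), \qquad W(\bar D) = M + n - W(D),$$
where $M=\max r(D)$ and $M_s = M-m$. The identity $M_s=M-m$ holds because the top-rank point of a Dyck path is necessarily a west end (otherwise the next N-step would exceed $M$) and the incoming N-step arrives there from a south end of rank $M-m$. Plugging these substitutions into the codinv formula of Theorem \ref{t-Phi-dinv} applied to $\bar D$ and simplifying the resulting linear inequality gives
$$\codinv(\bar D) \;=\; \#\bigl\{(s,w) \in S(D)\times W(D) : w-s > m+n \bigr\}.$$

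Next, Lemma \ref{l-mn-switch} says $D^T$ shares the same integer rank set as $D$. Because the transpose swaps N- and E-steps, a south end of $D^T$ is exactly an east end of $D$ and a west end of $D^T$ is a north end of $D$; combined with $E(D)=W(D)-n$ and $N(D)=S(D)+m$ this gives
$$S(D^T) = W(D)-n, \qquad W(D^T) = S(D)+m.$$
Applying Theorem \ref{t-Phi-dinv} to $D^T\in\cal D_{n,m}$ then yields
$$\codinv(D^T) \;=\; \#\bigl\{(s,w)\in S(D)\times W(D) : w-n > s+m \bigr\} \;=\; \#\bigl\{(s,w) : w-s > m+n\bigr\},$$
the same expression obtained for $\codinv(\bar D)$ in the previous paragraph.

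Finally, the proposition $\dinv(D)=\dinv(D^T)$ stated just before the sweep map, together with the symmetric bound $\max\dinv=(m-1)(n-1)/2$, implies $\codinv(D)=\codinv(D^T)$. Hence
$$\codinv(D) \;=\; \codinv(D^T) \;=\; \codinv(\bar D),$$
so $\dinv(D)=\dinv(\bar D)$.

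The main obstacle is the identification $S(D^T)=E(D)$ and $W(D^T)=N(D)$; it is an elementary but delicate verification requiring one to unpack the definition $D^T = D^{\mathrm{rev}}\big|_{N\leftrightarrow E}$ and use Lemma \ref{l-mn-switch} to match lattice points of $D^T$ with those of $D$, checking that under the N/E swap a south end of $D^T$ corresponds precisely to an east end of $D$.
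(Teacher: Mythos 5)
Your argument is correct and is essentially the paper's own proof: both convert the pair-count for $\codinv(\bar D)$ via the complement $r\mapsto M-r$ into a count over east/north ends of $D$, identify these with the south/west ends of the transpose (your $S(D^T)=E(D)$, $W(D^T)=N(D)$ is exactly the paper's ``work in $D^T$'' step), and conclude with $\dinv(D)=\dinv(D^T)$ and the shared maximum $(m-1)(n-1)/2$. One remark: your common expression $\#\{(s,w)\in S(D)\times W(D): w-s>m+n\}$ corresponds to reading Theorem \ref{t-Phi-dinv} as $\codinv(D)=\#\{(r_s,r_w): r_s\in S(D),\ r_w\in W(D),\ r_w<r_s\}$, i.e.\ with the inequality opposite to the direction as printed there; since you apply the same reading to both $\bar D$ and $D^T$, the chain $\codinv(\bar D)=\codinv(D^T)=\codinv(D)$ is valid under either convention (and a small check, e.g.\ $(m,n)=(3,2)$ with $D=NNEEE$, where $S(D)=\{0,3\}$, $W(D)=\{6,4,2\}$ and $\codinv(D)=\coarea(\Phi(D))=1$, confirms that the direction as printed is a typo and yours is the correct one).
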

\begin{proof}
  It is sufficient to show that
$\codinv(\bar D)= \codinv(D)$. Recall that $r(\bar D)=M-r(D)$, where $M=\max r(D)$. We have
\begin{align*}
  \codinv(\bar D) &= \# \{(\bar r_s,\bar r_w) :   \bar r_s\in S(\bar D), \bar r_w \in W(\bar D), \bar r_s<\bar r_w \} \\
&=\# \{(r_n,r_e) :   r_n\in N(D), r_e \in E(D), M-r_n<M-r_e \}  \qquad (\text{work in } D)\\
&=\# \{(r_e,r_n) :   r_n\in N(D), r_e \in E(D), r_e<r_n \} \\
&=\# \{(r_s,r_w) :   r_w\in W(D^T), r_s \in S(D^T), r_s<r_w \} \qquad (\text{work in } D^T) \\
&=\codinv(D^T) =\codinv(D).
\end{align*}
\end{proof}

\begin{rem}
  Sweep map has become an active subject in the recent 15 years. Variations and extensions has been found, and some classical bijections turns out to be disguised sweep map. See \cite{sweepmap}. One major conjecture in this area, in its simplest form, is that the $\Phi$ map is a bijection from the set $\cal D_{m,n}$ of Dyck paths to itself. The conjecture has only been shown for some special cases.
\end{rem}

\subsection{Skew-length of $(m,n)$-cores}

For an $(m,n)$-core $\lambda$, its skew length $s\ell^{m,n}(\lambda)$ (the superscript $m,n$ will be omitted if it is clear from the context) was defined in \cite{Armstrong-mn-core} to be the number of boxes that are both in the $m$ boundaries and in the $n$-rows.
The $m$ boundaries are the set of boxes whose hook length is less than $m$; the $n$-rows is better described in our language: if $H=H_\lambda=(h_1,\dots,h_a)_>$
is the first column hook lengths of $\lambda$, then the $i$-th row is called an $n$-row if $h_i+n\in S^n(H)$ for the
$n$-flush set $H$.

The following is an example of an $(8,5)$-core $\lambda=(9,5,3,2,1,1)$. The hook lengths of $\lambda$ is put to the right of $||$. The left most column are $h_i \bmod 5$ and we have underlined the $5$-rows, and bold faced  the $8$-boundaries. Thus $s\ell(\lambda)=6+3+1=10$.
$$\begin{array}{c||ccccccccc}
4=(14 \bmod 5) &14 &11&9&\mathbf{7}&\mathbf{6}&\mathbf{4}&\mathbf{3}&\mathbf{2}&\mathbf{1}\\\hline
4=(9 \bmod 5) &9 &\mathbf{6}&\mathbf{4}&\mathbf{2}&\mathbf{1}&&&&\\
1=(6 \bmod 5) &\mathbf{6} &\mathbf{3}&\mathbf{1}&&&&&&\\\hline
4=(4 \bmod 5) &\mathbf{4} &\mathbf{1}&&&&&&&\\
{2}=(2 \bmod 5) &\mathbf{2} &&&&&&&&\\\hline
1=(1 \bmod 5) &\mathbf{1} &&&&&&&&\\
\end{array}
 $$

\begin{theo}\label{t-skewlength-codinv}
  Let $m$ and $n$ be coprime positive integers. Then for any $(m,n)$-Dyck path $D$ we have
$$ \codinv(D)=s\ell( \alpha(D)).$$
Consequently, the skew length statistic of an $(m,n)$-core $\lambda$ is invariant under conjugation and switching the roles of $m$ and $n$. In formula, we have
$$ s\ell^{m,n}(\lambda)=s\ell^{n,m}(\lambda) =s\ell^{m,n}(\lambda^T)=s\ell^{n,m}(\lambda^T).$$
\end{theo}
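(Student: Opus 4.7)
My plan is to prove the main equality $\codinv(D) = s\ell(\alpha(D))$ by reducing it to a row-by-row combinatorial identity on the $(m,n)$-core $\lambda = \alpha(D)$, then to obtain the two symmetry statements as immediate corollaries of earlier results in the paper.

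I would begin from the formula of Theorem~\ref{t-Phi-dinv}, which expresses $\codinv(D) = \coarea(\Phi(D))$ as a count of pairs $(r_s, r_w) \in S(D) \times W(D)$ satisfying a simple rank inequality. Two structural observations localise the sum. First, every E step of $D$ promotes an east end to a west end by adding $n$ to the rank, so $W(D) = E(D) + n$ and $r_w \ge n$ for every $r_w \in W(D)$; only south ends with $r_s > n$ can contribute. Second, Anderson's bijection gives $S(D) = S^n(H_\lambda)$, and a south end $r_s > n$ is precisely of the form $r_s = h_i + n$ for a first-column hook $h_i$ maximal in its residue class modulo $n$, that is, for an $n$-row $i$ of $\lambda$. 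Using the further identity $E(D) = S(D^T) = S^m(H_\lambda)$ (Anderson's bijection applied to $\lambda$ regarded as an $(n,m)$-core), the contribution from a single $n$-row $i$ equals $|\{s \in S^m(H_\lambda) : s < h_i\}|$.

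The proof thus hinges on the combinatorial identity
\begin{equation}\label{eq:keyid}
    |\{s \in S^m(H_\lambda) : s < h_i\}| \;=\; \#\{j \in [1,\lambda_i] : h(i,j) < m\}
\end{equation}
for every $n$-row $i$. I would establish \eqref{eq:keyid} using two classical facts: the hook-sum identity $h(i,j) = h_i + h'_j - h_1$, where $h'_j$ is the $j$-th first-row hook of $\lambda$ and $h_1$ is the largest hook; and the complementarity $h' \in H_{\lambda^T}$ if and only if $h' \in [1, h_1]$ and $h_1 - h' \notin H_\lambda$. Substituting $g = h_1 - h'$, the right-hand side of \eqref{eq:keyid} rewrites as the number of non-hooks of $H_\lambda$ in the length-$(m-1)$ window $[\max(0, h_i - m + 1),\ h_i - 1]$. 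The left-hand side, via the $m$-flush description of $S^m(H_\lambda)$ as the residue-class minima of $\mathbb{Z}_{\ge 0} \setminus H_\lambda$ modulo $m$, counts the residue classes whose smallest non-hook is less than $h_i$. Matching these counts reduces to observing that the window contains at most one integer per residue class, and that by $m$-flushness this integer is a non-hook precisely when $s_j^m < h_i$; the residue $j \equiv h_i \pmod m$ is excluded on both sides automatically, on the left because $h_i \in H_\lambda$ forces $s_j^m \ge h_i + m$, and on the right because the corresponding window integer is either negative or lies in $H_\lambda$.

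Summing \eqref{eq:keyid} over the $n$-rows of $\lambda$ yields $\codinv(D) = s\ell(\alpha(D))$. The two symmetry consequences then drop out: applying the main equality to $\bar D$, combined with Theorem~\ref{t-mncore-conjugate} and the corollary $\codinv(\bar D) = \codinv(D)$ proved just above, gives $s\ell(\lambda^T) = s\ell(\lambda)$; applying it to $D^T$, combined with Lemma~\ref{l-mn-switch} (so that $D^T$ realises $\lambda$ as an $(n,m)$-core) and the symmetry $\dinv(D^T) = \dinv(D)$, gives $s\ell^{n,m}(\lambda) = s\ell^{m,n}(\lambda)$. The main obstacle is the identity \eqref{eq:keyid}: the residue-class bijection behind it is cleanest when $h_i \ge m$, and some care is needed in the small-$h_i$ regime where the window contains $0$ and extends below it; a mild surprise is that \eqref{eq:keyid} actually holds for every row $i$, the $n$-row hypothesis being needed only to guarantee that $h_i + n$ is a south end of $D$.
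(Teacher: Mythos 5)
Your proposal is correct, and it reaches the key identity by a genuinely different route than the paper. Both arguments share the same frame: Theorem~\ref{t-Phi-dinv} turns $\codinv(D)$ into a pair count on $S(D)\times W(D)$; the south ends exceeding $n$ are exactly the ranks $h_i+n$ for the $n$-rows $i$, so the count localizes row by row, each $n$-row contributing $\#\{s\in S^m(H_\lambda): s<h_i\}$; and the symmetry statements follow from $\codinv(\bar D)=\codinv(D)$ together with Theorem~\ref{t-mncore-conjugate}, and from $\dinv(D^T)=\dinv(D)$ together with Lemma~\ref{l-mn-switch} -- exactly how the paper intends its ``consequently'' clause. Where you diverge is in showing that this row contribution equals the number of $m$-boundary boxes in row $i$: the paper applies its own Theorem~\ref{t-SH-complement} to each truncation $\lambda^i$, writing $|H_{(\lambda^i)^T}\cap\{1,\dots,m-1\}| = |S^m(H_{(\lambda^i)^T})\cap \Z_{>m}| = |\left(h_i+m-S^m(H^i)\right)\cap\Z_{>m}|$ and then invoking $m$-flushness to replace $S^m(H^i)$ by $S^m(H)$, whereas you prove the identity directly from two classical beta-set facts, $h(i,j)=h_i+h'_j-h_1$ and the complementation $h'\in H_{\lambda^T}$ iff $h'\in[1,h_1]$ with $h_1-h'\notin H_\lambda$, converting the boundary-box count into a count of non-hooks in the window $[\max(0,h_i-m+1),\,h_i-1]$ and matching it residue class by residue class against $S^m(H_\lambda)$ via $m$-flushness. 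I checked your window argument, including the $h_i<m$ regime and the exclusion of the class of $h_i$ on both sides, and it is sound; your remark that the identity holds for every row, the $n$-row condition only selecting which rows enter the sum, is accurate and is implicit in the paper's derivation as well. The trade-off: the paper's route stays internal to its own machinery and showcases Theorem~\ref{t-SH-complement}, while yours is independent of that theorem but imports the two classical facts unproved (the complementation is essentially the flush case of Theorem~\ref{t-SH-complement}). One point worth recording in your favor: you implicitly work with the count $\#\{(r_s,r_w): r_s\in S(D),\ r_w\in W(D),\ r_w<r_s\}$, which is the correct one -- the inequality $r_s<r_w$ as printed in Theorem~\ref{t-Phi-dinv} and in the last two lines of the paper's proof is a consistent sign typo, as one checks on $D=N^nE^m$, where $\codinv(D)=(m-1)(n-1)/2$ equals the number of pairs with $r_w<r_s$; your localization ``only south ends with $r_s>n$ can contribute'' is valid precisely under this corrected direction, since $r_w\ge n$ for all $r_w\in W(D)$.
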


To prove Theorem \ref{t-skewlength-codinv}, we need a better description of $s\ell(\lambda)$. Let $\lambda=\lambda_1\ge \cdots\ge \lambda_a>0$ be an $(m,n)$-core with
$H=H_\lambda=\{h_1,\dots, h_a\}_>$. Denote by $\lambda^i=\lambda_i\ge \cdots\ge \lambda_a$ the partition obtained from $\lambda$ by removing the first $i-1$ rows. Then $\lambda^1=\lambda$. Denote by $H^i=H(\lambda^i)=(h_i,\dots, h_a)$. Then the $i$-th row hook lengths of $\lambda$ is just the fist row hook lengths of $\lambda^i$, which is
$H_{(\lambda^i)^T}$.

\begin{proof}[Proof of Theorem \ref{t-skewlength-codinv}]
The number of boxes in row $i$ and also in the $m$-boundaries is:
\begin{align*}
 | H_{(\lambda^i)^T} \cap \{1,2,\dots, m-1\} |
                &=  | S^m\left( H_{(\lambda^i)^T}\right) \cap \Z_{>m} | \\
   \text{(by Theorem \ref{t-SH-complement}) }           &=  |\left( h_i+m - S^m(H^i) \right) \cap \Z_{>m} | \\
 &=  |\left( h_i - S^m(H^i) \right) \cap \Z_{>0} | \\
&=  |\left( S^m(H^i)-h_i \right) \cap \Z_{<0} | \\
&= |\left( S^m(H)-h_i \right) \cap \Z_{<0} |,
\end{align*}
where in the last step, we use the $m$-flushness of $H$, which implies that (recall that $s^m_j(H^i)-m\le h_i$ by definition)
$$H^i \cap (j+m \Z)= \{ s^m_j(H^i)-m, \dots, m+j,j\} = \{ s^m_j(H)-m,\dots, m+j,j\} \cap \Z_{\le h_i},$$
and hence  $s^m_j(H^i)<h_i$ if and only if $s^m_j(H)<h_i$.

 If $\lambda= \alpha(D)$ then $S^n(H_\lambda)$ is just $S(D)$ and $S^m(H_\lambda)$ is just $E(D)$. With these arguments handy, we derive that
\begin{align*}
  s\ell(\lambda)&= \sum_{h_i+n \in S^n(H)}  | H{(\lambda^i)^T} \cap \{1,2,\dots, m-1\} | \\
&=\sum_{h_i+n \in S^n(H)} |\left( S^m(H)-h_i \right) \cap \Z_{<0} |\\
&= | \{(r_s,r_e) :   r_s\in S(D), r_e \in E(D), r_s-r_e-n <0 \}|\\
&= | \{(r_s,r_w) :   r_s\in S(D), r_w \in W(D), r_s-r_w <0 \} |.
\end{align*}
This is exactly the coarea of $\Phi(D)$, which is equal to $\codinv(D)$ by Theorem \ref{t-Phi-dinv}.
\end{proof}

\medskip
\noindent
{\small \textbf{Acknowledgements:} This work was done during the author's visiting at UCSD. The author is very grateful to Professor Adriano Garsia for
inspirations and encouraging conversations.
%
This work was partially supported by the National Natural Science Foundation of China (11171231).}

\end{document}